\def\x{2.9}
\title{Automorphic Lie Algebras and Cohomology of Root Systems}
\date{}
\author{Vincent Knibbeler$^{a,b}$, Sara Lombardo$^{a}$ and Jan A. Sanders$^{b}$ \\
\\
$^{a}$Department of Mathematical Sciences, School of Science\\
Loughborough University, Schofield Building, LE11 3TU, Loughborough, UK\\
\and
\\
$^{b}$Department of Mathematics, Faculty of Sciences\\
Vrije Universiteit, De Boelelaan 1081a, 1081 HV Amsterdam, The Netherlands
}
\newcommand{\roots}{\Phi}
\newcommand{\sroots}{\Delta}
\newcommand{\rootp}{\pmb{+}}
\newcommand{\rootm}{\pmb{-}}
\newcommand{\rootpm}{\pmb{\pm}}
\newcommand{\rootmp}{\pmb{\mp}}
\newcommand{\mb}[1]{\mathbbm{#1}}
\newcommand{\mero}[1][\cp1]{\mathcal{M}(#1)}
\newcommand{\mf}[1]{\mathfrak{#1}}
\newcommand{\Zn}[1]{\bbbz/{#1}\bbbz}
\newcommand{\Aut}{\mathrm{Aut}}
\newcommand{\inp}[2]{(#1\cbr\, #2)}
\def\cp1{{\mathbbm C\mathbb P}^1}
\def\bbbz{\mathbbm Z}
\def\ad{{\mathrm{ad}}}
\def\Mon{\mathsf{M}}
\def\lbr{{[\,}}
\def\br{{|\,}}
\def\rbr{{\,]}}
\def\ibr{{|\,}}
\def\cbr{{,\,}}
\newcommand\colorchain{gray}
\newcommand\colorp{red}
\newcommand\colorm{blue}
\tikzstyle{root}=[scale=0.9,shape=circle ]
\tikzstyle{chain}=[color=\colorchain,->,shorten >=8pt,shorten <=8pt, >=stealth]
\tikzstyle{schain}=[color=\colorchain,-,shorten >=4pt,shorten <=4pt]
\tikzstyle{cochainp}=[thick, color=\colorp,->,shorten >=8pt,shorten <=8pt, >=stealth]
\tikzstyle{cochainm}=[thick, color=\colorm,->,shorten >=8pt,shorten <=8pt, >=stealth]
\tikzstyle{scochainp}=[thick, color=\colorp,-,shorten >=4pt,shorten <=4pt]
\newcommand{\scaleAA}{0.5}
\newtheorem{Theorem}{Theorem}[section]
\newtheorem{Lemma}[Theorem]{Lemma}
\newtheorem{Corollary}[Theorem]{Corollary}
\newtheorem{Definition}[Theorem]{Definition}
\newtheorem{Example}[Theorem]{Example}
\newtheorem{Remark}[Theorem]{Remark}
\begin{document}

\maketitle
\begin{abstract}
A cohomology theory of root systems emerges naturally in the context of Automorphic Lie Algebras, where it helps formulating some structure theory questions. In particular, one can find concrete models for an Automorphic Lie Algebra by integrating cocycles.
In this paper we define this cohomology and show its connection with the theory of Automorphic Lie Algebras. Furthermore, we discuss its properties: we define the cup product, we show that it can be restricted to symmetric forms, that it is equivariant with respect to the automorphism group of the root system, and finally we show acyclicity at dimension two of the symmetric part, which is exactly what is needed to find concrete models for Automorphic Lie Algebras.

Furthermore, we show how the cohomology of root systems finds application beyond the theory of Automorphic Lie Algebras by applying it to the theory of contractions and filtrations of Lie algebras. In particular, we show that contractions associated to Cartan $\mb{Z}$-filtrations of simple Lie algebras are classified by $2$-cocycles, due again to the vanishing of the symmetric part of the second cohomology group.

\vspace{1cm}
\noindent\textit{Mathematics Subject Classification}
\\17B05  	Lie algebras and Lie superalgebras: Structure theory;
\\17B22  	Lie algebras and Lie superalgebras: Root systems;
\\17B65  	Lie algebras and Lie superalgebras: Infinite-dimensional Lie (super)algebras

\vspace{1cm}
\noindent\textit{Key Words}\\
cohomology of root systems, groupoid cohomology, automorphic Lie algebras, diagonal contractions, Cartan filtrations, generalised In\"{o}n\"{u}-Wigner contractions

\vspace{1cm}
\noindent\textit{Acknowledgements}\\
This work has been initially supported by grants from EPSRC (EP/E044646/1 and \\EP/E044646/2) and NWO (VENI 016.073.026). 
S L  gratefully acknowledges hospitality from the Isaac Newton Institute for Mathematical Science in Cambridge, UK, during September 2019, where the final revision of this paper was completed.

\end{abstract}
\section{Introduction: Automorphic Lie Algebras and their models}
\label{sec:intro}
The purpose of this paper is to set up a cohomology theory for root systems \(\roots\). The motivation comes from the theory of Automorphic Lie Algebras \cite{Lombardo,lm_cmp05,knibbeler2014}, a generalisation of current algebras that arose in the context of integrable systems, which also provide a natural application. The theory itself is however very general, and potentially could reach out to many branches of mathematics. Indeed, while it can be seen as a branch of groupoid cohomology, its novelty comes from the fact that it is constructed on root systems \(\roots\) and thus benefits from their many properties.  

For the purpose of this paper, Automorphic Lie Algebras, or ALiAs, can be defined as follows. 
For a subset $\Gamma$ on the projective line $\mb{CP}^1$, let $\mero_\Gamma$ denote the meromorphic functions on $\mb{CP}^1$ whose poles are contained in $\Gamma$.  
Let $G$ be a finite group of automorphisms of $\mb{CP}^1$ and assume $G$ also acts on a complex Lie algebra $\mf{g}$ by Lie algebra automorphisms. Then, for any $G$-stable set $\Gamma\subset\mb{CP}^1$, $G$ acts canonically on the Lie algebra $\mf{g}\otimes\mero_\Gamma$ by Lie algebra automorphisms (the Lie bracket of $\mf{g}\otimes\mero_\Gamma$ is taken to be the $\mero$-linear extension of the bracket of $\mf{g}$). The Lie subalgebra $\mf{A}$ consisting of pointwise 
fixed elements is called an Automorphic Lie Algebra \cite{Lombardo,lm_cmp05}
\[
\mf{A}=\left(\mf{g}\otimes\mero_\Gamma\right)^G\,.
\] 
Originally defined in the context of integrable systems (see, e.g. \cite{Lombardo,lm_cmp05}) where $G$ is called a \emph{reduction group} \cite{Mikhailov81,lm_jpa04,lm_cmp05}, they are rather universal equivariant objects, arising in different and seemingly disconnected contexts.
As a matter of fact, they are also known by the name Equivariant Map Algebras, following a completely independent development (see, e.g. \cite{NS_2011}). Indeed, $\mf{A}$ consists of all $G$-equivariant meromorphic maps $\mb{CP}^1\rightarrow\mf{g}$ with poles confined to $\Gamma$.

The following example illustrates the simplest ALiA, where $\mf{g}=\mf{sl}_2(\mb{C})$ and $G=\Zn{2}$; we refer the interested reader to \cite{MR2718933,KLS15} for details, more examples and classification results.

\begin{Example}
\label{ex:simpleALIA}
Let $\mf{sl}_2(\mb{C})$ be represented by complex $2\times 2$ matrices with zero trace. 
If elements of $\cp1$ are written as $[X:Y]$, where $(X,Y)\in\mb{C}^2\setminus\{(0,0)\}$ and $[X:Y]=[cX:cY]$ for all $c\in\mb{C}\setminus\{0\}$, then meromorphic functions $\mero$ can be identified with quotients of homogeneous polynomials in the variables $X$ and $Y$ of identical degree.
Let the action of the reduction group $G=\Zn{2}$ on the projective line be given by 
\[[X:Y]\mapsto [Y:X].\] 
The two orbits with nontrivial isotropy are singletons $\Gamma_i=\{[1:1]\}$ and $\Gamma_j=\{[-1:1]\}$. 
Let $\Gamma$ be $\{[0:1],[1:0]\}$ (this orbit has trivial isotropy) and construct automorphic functions $\mb{I}$ and $\mb{J}$ to vanish on $\Gamma_i$ and $\Gamma_j$, respectively, and to have poles in $\Gamma$: 
\[\mb{I}=-\frac{(X-Y)^2}{4XY},\quad \mb{J}=\frac{(X+Y)^2}{4XY},\quad\mb{I}+\mb{J}+1=0.\] 
The ring of automorphic functions is $\mero_\Gamma^{\Zn{2}}=\mb{C}[\mb{I}]$.

Consider next a $\Zn{2}$ action on the base Lie algebra, defined by
\[\begin{pmatrix}a&b\\c&-a\end{pmatrix}\mapsto \begin{pmatrix}1&0\\0&-1\end{pmatrix}\begin{pmatrix}a&b\\c&-a\end{pmatrix}\begin{pmatrix}1&0\\0&-1\end{pmatrix}^{-1}= \begin{pmatrix}a&-b\\-c&-a\end{pmatrix}.\]
Then, the Automorphic Lie Algebra $\mf{A}=\left(\mf{sl}_2(\mb{C})\otimes\mero_\Gamma\right)^{\Zn{2}}$ is generated as a $\mb{C}[\mb{I}]$-module by
\[
{h}=\begin{pmatrix}1&0\\0&-1\end{pmatrix},\quad
{e}=\frac{i(X-Y)(X+Y)}{4XY}\begin{pmatrix}0&1\\0&0\end{pmatrix},\quad
{f}=\frac{i(X-Y)(X+Y)}{4XY}\begin{pmatrix}0&0\\1&0\end{pmatrix},
\]
where the constant factor $i/4$ is used to obtain the Lie bracket relations
\[
[h,e]=2e,\quad[h,f]=-2f,\quad[e,f]=\mb{I}\mb{J}\,h.
\]
From these Lie brackets it is clear that the Lie subalgebra of $\mf{sl}_2(\mb{C}[\mb{I}])$ generated as a $\mb{C}[\mb{I}]$-module by \[H=\begin{pmatrix}1&0\\0&-1\end{pmatrix},\quad E=\begin{pmatrix}0&\mb{IJ}\\0&0\end{pmatrix},\quad F=\begin{pmatrix}0&0\\1&0\end{pmatrix}\] is isomorphic to $\mf{A}$, and is arguably simpler to work with than the original representation of $\mf{A}$. Observe that for larger reduction groups $G$, the matrices of invariants $H,E,F$ provide a tremendous simplification relative to the invariant matrices $h,e,f$. This \emph{model} is not unique, as the functions $\mb{I}$ and $\mb{J}$ can be placed in either of the nilpotent generators. The model can be seen as a function $\omega^1$ from the three eigenvalues $\{0,2,-2\}$ of $\ad(h)$ to the space of monomials in $\mb{I}$ and $\mb{J}$. The monomials in the structure constants take the form $\mathsf{d}^1\omega^1(\alpha,\beta)=\frac{\omega^1(\alpha)\omega^1(\beta)}{\omega^1(\alpha+\beta)}$, $\alpha,\beta\in\{0,2,-2\}$, and it is precisely the kernel of $\mathsf{d}^1$ which describes the freedom in choosing a model for $\mf{A}$.
\end{Example}
\begin{Remark}
In the previous example, we immediately find a Cartan-Weyl basis for the Automorphic Lie algebra. 
This is due to a convenient property of cyclic group actions on simple Lie algebras by inner automorphisms, namely $\mf{g}^{\Zn{n}}$ contains a Cartan subalgebra (CSA) of the simple base Lie algebra $\mf{g}$. Consequently, the root spaces of $\mf{g}$ with respect to this CSA are preserved by $\Zn{n}$. This is not the case for general reduction groups $G$ and in fact obtaining a Chevalley normal form is a significant computational challenge and proving its existence in an abstract setting a major theoretical challenge.
\end{Remark}
As illustrated in Example \ref{ex:simpleALIA}, in the theory of ALiAs one can compute a Cartan-Weyl basis \cite{MR2718933,KLS15} and find that the structure constants are monomials in at most three variables, say \(\mb{I}\), \(\mb{J}\) and \(\mb{K}\) (which are $G$-invariant functions, each vanishing precisely on one of the $G$-orbits in $\mb{CP}^1$ that have nontrivial isotropy, and satisfy \(\mb{I}+\mb{J}+\mb{K}=0\)).
An immediate question is whether one can find a \emph{model} for such a Lie algebra by taking the Weyl-Chevalley normal form of a classical Lie algebra over \(\mb{C}\) and multiply the weight vectors by monomials in \(\mb{I}\), \(\mb{J}\) and \(\mb{K}\),
as in Definition \ref{def:oneform}. 
Such a model can help us to solve the important question of whether different models of ALiAs are equivalent under the action of \(\Aut(\roots)\), automorphisms of the root system.
It can also provide a useful tool to analyse a more general isomorphism question, namely whether two given ALiAs are isomorphic.

The model can be seen as a 1-form on the root system, in such a way that the coboundary operator \(\mathsf{d}^1\) of this 1-form determines the structure constants of the ALiA.
A natural question is whether every ALiA admits such a model: if the second cohomology is zero, this is indeed the case.
The proof that the second cohomology is zero for root systems of simple Lie algebras is a key-result of this paper.
Since it is entirely constructive, the proof also provides an integration procedure, allowing one to find a model from the given ALiA.

The construction of a Lie algebra with parameters can also be done without reference to ALiAs, by changing the field to monomials with complex coefficients, that is to say, to each weight vector \(e_\alpha\) one
assigns a monomial. This description is worked out in this section
and may serve as an example for the reader when we move on to the abstract theory in the subsequent sections.

\begin{Definition}
\label{def:realisation of root system}
Let \(C^2_\wedge(\roots,\mb{Z})\) be the space of skew 2-forms with arguments in the root system \(\roots\) and values in \(\mb{Z}\), where `skew' stands here for the property $\omega(\beta,\alpha)=-\omega(\alpha,\beta)$.
It is well known (e.g. \cite[Section 25.2]{MR0323842}, \cite[Introduction]{tits1966}) that the bracket relations of a semisimple Lie algebra \(\mf{g}\) over \(\mb{C}\) can be written in terms of a Cartan-Weyl basis \(\langle e_\alpha, e_{\rootm\alpha},h_r\rangle_{\alpha\in\roots^+,\, i=1,\ldots,\ell}\), 
where $\roots^+$ is a set of positive roots,
in which the commutation relations are: 
\begin{align*}
{[}h_r,h_{s}]&=0,\\
{[}h_r, e_\alpha]&=\alpha(h_r)e_\alpha,\\
{[} e_\alpha, e_\beta]&=\varepsilon^2(\alpha,\beta) e_{\alpha\rootp \beta},\quad \varepsilon^2\in C_\wedge^2(\roots,\mb{Z}),\quad \alpha\rootp\beta\in\roots,\\
{[}e_\alpha,e_{\rootm\alpha}]&=\varepsilon^2(\alpha,\rootm\alpha) h_\alpha.
\end{align*}
We use the \(\rootp, \rootm\) symbol for addition, resp. subtraction of the roots \(\alpha\in\roots\). 
If \(\alpha=\sum_{i=1}^\ell m_i \alpha_i\), \(\alpha_i\in\sroots\), where \(\sroots\) denotes the set of simple roots, then \(h_\alpha=\sum_{i=1}^\ell m_i h_i\).
We scale \(e_{\rootm\alpha}\) such that \(\varepsilon^2(\alpha,\rootm\alpha)=1, \,\alpha\in\sroots\).
\end{Definition}
For possible choices of the values of \(\varepsilon^2\) see \cite{tits1966}, and see \cite[Section 03]{frenkel1980basic} and \cite{MR1104219} for the early cohomological interpretation of it.

\begin{Remark}
\label{rem:antisymmetry}
Notice that in \(C^2_\wedge(\roots,\mb{Z})\) the skew symmetry condition is $\omega(\beta,\alpha)=-\omega(\alpha,\beta)$, while in $C^{2}_{-}(\roots,M)$ -- see Section \ref{sec:reversalsymmetry} --  the antisymmetry condition will be defined by $\omega(\beta,\alpha)=\omega(\alpha,\beta)^{-1}$.
\end{Remark}

We now change the field \(\mb{C}\) in the Lie algebra by allowing the coefficients \(\varepsilon^2(\alpha,\beta)\) to be multiplied by monomials in a finite number of variables and call the new Lie algebra
\(\tilde{\mf{g}}\).
An Automorphic Lie Algebra is of this form if its Cartan subalgebra equals that of the base Lie algebra with the field extended to the ring of automorphic functions (cf. \cite{knibbeler2014}).

\begin{Definition}
\label{def:Mon}
Let \(\Mon(\mb{I}_1,\cdots,\mb{I}_k)=\{\mb{I}_1^{m_1} \cdots \mb{I}_k^{m_k}\;|\;m_1,\ldots,m_k\in \mb{Z}\}\) be the {\em rational monomials}, 
and let \(\Mon[\mb{I}_1,\cdots,\mb{I}_k]=\{\mb{I}_1^{m_1} \cdots \mb{I}_k^{m_k}\;|\;m_1,\ldots,m_k\in \mb{N}_0\}\) be the {\em natural monomials}.  
The set of rational monomials (resp. natural monomials) inherits an additive and multiplicative structure from \(\mb{Z}\) (resp. \(\mb{N}\)) by identifying a monomial with its power.
For instance: one adds \(\mb{I}_1^4 \mb{I}_2^2 \) to \(\mb{I}_1^3 \) to obtain \(\mb{I}_1^7\mb{I}_2^2\) and multiplies them to obtain \(\mb{I}_1^{12}\). 
So we will identify \(\Mon(\mb{I}_1,\cdots,\mb{I}_k)\) with \(\mb{Z}^k\) and \(\Mon[\mb{I}_1,\cdots,\mb{I}_k]\) with \(\mb{N}_0^k\).
\end{Definition}
We define a symmetric 2-form  \(\omega_+^2\in  C_+^2(\roots,\Mon(\mb{I}_1,\cdots,\mb{I}_k))\), where the \(+\) indicates the symmetry \(\omega_+^2(\alpha,\beta)=\omega_+^2(\beta,\alpha)\). 
Together with the previous antisymmetric 2-form 
\(\varepsilon^2\in C_\wedge^2(\roots,\mb{Z})\), 
we want it to define a Lie algebra by
\begin{align}
\begin{split}\label{eq:genCartanWeyl} 
{[}h_r,h_{s}]&=0,\\
{[}h_r, e_\alpha]&=\alpha(h_r)e_\alpha,\\
{[} e_\alpha, e_\beta]&=\varepsilon^2(\alpha,\beta) \omega_+^2(\alpha,\beta) e_{\alpha\rootp \beta},\quad  \alpha\rootp\beta\in\roots,\\
{[}e_\alpha,e_{\rootm\alpha}]&=\varepsilon^2(\alpha,\rootm\alpha)\omega_+^2(\alpha,\rootm\alpha) h_\alpha.
\end{split}
\end{align}
For this to happen, the terms of the Jacobi identity must all three have the same monomial in
\(\Mon(\mb{I}_1,\cdots,\mb{I}_k)\), i.e. computing modulo multiplication with integers one has:
\begin{align*}
{[[}e_\alpha,e_\beta],e_\gamma]&\equiv \omega_+^2(\alpha,\beta)\omega_+^2(\alpha\rootp\beta,\gamma)e_{\alpha\rootp\beta\rootp\gamma},\\
{[}e_\alpha,[e_\beta,e_\gamma]]&\equiv \omega_+^2(\alpha,\beta\rootp\gamma)\omega_+^2(\beta,\gamma)e_{\alpha\rootp\beta\rootp\gamma},\\
{[}e_\beta,[e_\alpha,e_\gamma]]&\equiv \omega_+^2(\beta,\alpha\rootp\gamma)\omega_+^2(\alpha,\gamma)e_{\alpha\rootp\beta\rootp\gamma},
\end{align*}
where the multiplication among the \(\omega_+^2\)s is done according to the {\em addition} rules as given in Definition \ref{def:Mon} for \(\Mon(\mb{I}_1,\cdots,\mb{I}_k)\).
We now define 
\[
\mathsf{d}^2\omega_+^2(\alpha,\beta,\gamma)=\frac{\omega_+^2(\beta,\gamma) \omega_+^2(\alpha,\beta\rootp\gamma)}{\omega_+^2(\alpha,\beta)\omega_+^2(\alpha\rootp\beta,\gamma)},
\]
where $\mathsf{d}^2$ is a coboundary operator -- see Section \ref{sec:differential} for this choice of coboundary notation.\\
If \(\mathsf{d}^2\omega_+^2(\alpha,\beta,\gamma)=1\) for all \(\alpha, \beta, \gamma\in \roots\) such that \(\alpha\rootp\beta\), \(\beta\rootp\gamma\) and \( \alpha\rootp\beta\rootp\gamma\) exist,
then the three monomials in the Jacobi identity are equal, and the Jacobi identity is satisfied (by the Jacobi identity of the underlying Lie algebra associated to $\roots$). 
Vice versa, if the Jacobi identity is satisfied, then \(\mathsf{d}^2\omega_+^2(\alpha,\beta,\gamma)=1\). Indeed, for every triple \(\alpha, \beta, \gamma\in \roots\) such that \(\alpha\rootp\beta\), \(\beta\rootp\gamma\) and \( \alpha\rootp\beta\rootp\gamma\) are roots, the two terms ${[[}e_\alpha,e_\beta],e_\gamma]$ and ${[}e_\alpha,[e_\beta,e_\gamma]]$ in the Jacobi identity are nonzero. Therefore their monomial powers are identical, i.e. \(\mathsf{d}^2\omega_+^2(\alpha,\beta,\gamma)=1\).

Let $Z_+^2(\roots,\Mon(\mb{I}_1,\cdots,\mb{I}_k))$ be the the group of $2$-cocycles. We have just proven the following theorem.
\begin{Theorem}
Any \(\omega_+^2\in Z_+^2(\roots,\Mon(\mb{I}_1,\cdots,\mb{I}_k))\) determines a Lie algebra with monomial coefficients of the form (\ref{eq:genCartanWeyl}). 
\end{Theorem}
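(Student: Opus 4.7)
The plan is to verify the three defining conditions of a Lie algebra on the generators \(\{h_r, e_\alpha\}\) under the brackets (\ref{eq:genCartanWeyl}): bilinearity, skew-symmetry, and the Jacobi identity. Bilinearity is built into the definition, so only skew-symmetry and Jacobi need attention. Skew-symmetry of \([e_\alpha,e_\beta]\) follows by combining the skewness of \(\varepsilon^2\) with respect to the additive structure on \(\mb{Z}\) (so that \(\varepsilon^2(\beta,\alpha)=-\varepsilon^2(\alpha,\beta)\)) with the assumption that \(\omega_+^2\) is symmetric; the same two ingredients handle the case \(\beta=\rootm\alpha\). The brackets involving Cartan elements are manifestly skew by the form of the relations.

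The heart of the argument is the Jacobi identity. For the generic case of three root vectors \(e_\alpha,e_\beta,e_\gamma\) with \(\alpha\rootp\beta\rootp\gamma\in\roots\), the computation already carried out just above the theorem expresses each summand of the cyclic sum as a product of \(\omega_+^2\)-monomials times a Cartan--Weyl bracket of the base algebra times \(e_{\alpha\rootp\beta\rootp\gamma}\). The cocycle condition \(\mathsf{d}^2\omega_+^2=1\) forces the three monomial prefactors to agree, so a common monomial factor can be pulled out of the cyclic sum; what remains is the corresponding cyclic sum over \(\mb{C}\) governed by the \(\varepsilon^2\)-constants, and this vanishes by the Jacobi identity of the base semisimple Lie algebra \(\mf{g}\).

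The remaining bookkeeping is a case analysis. When \(\alpha\rootp\beta\rootp\gamma=0\) some terms of the cyclic sum land in the Cartan subalgebra through \([e_\alpha,e_{\rootm\alpha}]=\varepsilon^2(\alpha,\rootm\alpha)\omega_+^2(\alpha,\rootm\alpha)h_\alpha\); here the same reduction applies after inserting the appropriate structure constants. When an intermediate sum such as \(\alpha\rootp\beta\) is not a root and is nonzero, the bracket vanishes in (\ref{eq:genCartanWeyl}), and the two remaining cyclic terms vanish for reasons inherited from \(\mf{g}\). Cases with a Cartan argument reduce to linearity identities of the form \((\beta(h_r)-(\beta\rootp\gamma)(h_r)+\gamma(h_r))\varepsilon^2(\beta,\gamma)\omega_+^2(\beta,\gamma)=0\), in which \(\omega_+^2\) appears only as an inert common factor. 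The step most likely to demand care is the \(\alpha\rootp\beta\rootp\gamma=0\) case, where the brackets on the two sides of the Jacobi relation are of different types (Cartan versus root-vector), and one must ensure that the cocycle condition evaluated at suitable triples — including triples that formally involve \(\rootm\alpha\), \(\rootm\beta\), \(\rootm\gamma\) — still matches the monomial powers produced on each side.
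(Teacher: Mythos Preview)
Your proposal is correct and follows essentially the same approach as the paper: the cocycle condition \(\mathsf{d}^2\omega_+^2=1\) forces the monomial prefactors in each term of the Jacobi identity to coincide, so one can factor them out and invoke the Jacobi identity of the underlying semisimple Lie algebra \(\mf{g}\). Your treatment is in fact more thorough than the paper's own argument---which handles only the generic triple of root vectors explicitly and leaves skew-symmetry, the Cartan cases, and the degenerate Jacobi cases (such as \(\alpha\rootp\beta\rootp\gamma=0\)) implicit---but the core idea is identical.
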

\begin{Definition}\label{def:oneform}
Let \(\omega^1\in C^1(\roots,\Mon(\mb{I}_1,\cdots,\mb{I}_k))\) and
\(\omega^2_+(\alpha,\beta)=\mathsf{d}^1\omega^1(\alpha,\beta)=\frac{\omega^1(\alpha)\omega^1(\beta)}{\omega^1(\alpha\rootp\beta)}\). We say that \(\omega^1\) is a \emph{model} for \(\omega_+^2\). 
If we take a Cartan-Weyl basis of a Lie algebra and define \(\tilde{e}_\alpha=\omega^1(\alpha) e_\alpha\),
we again have a Lie algebra with an identical Cartan matrix.
\end{Definition}
The previous definition is justified by the following corollary.
\begin{Corollary}
\label{cor:model}
Let $\omega^1$ be defined as above. Then \(\mathsf{d}^1\omega^1\in\) 
\(Z_+^2(\roots,\Mon(\mb{I}_1,\cdots,\mb{I}_k))\)
and satisfies the requirements to define a Lie algebra. Furthermore, \(K_{\tilde{\mf{g}}}( \tilde{e}_\alpha,\tilde{e}_{\rootm\alpha})=\mathsf{d}^1\omega^1(\alpha,\rootm\alpha)K_{\mf{g}}(e_\alpha,e_{\rootm\alpha})\), \(\alpha\in\roots^+\), where \(K_{\tilde{\mf{g}}}\) and \(K_{\mf{g}}\) are the Killing forms of the Lie algebras \(\tilde{\mf{g}}\) and \(\mf{g}\), respectively.
\end{Corollary}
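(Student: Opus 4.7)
The plan is to establish the two claims in the corollary separately and rather directly: first that $\mathsf{d}^1\omega^1$ is a symmetric $2$-cocycle (so the preceding Theorem applies), and second that the Killing form on $\tilde{\mf{g}}$ scales by exactly $\mathsf{d}^1\omega^1(\alpha,\rootm\alpha)$.

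For the first claim, I would start by observing that symmetry $\mathsf{d}^1\omega^1(\alpha,\beta)=\mathsf{d}^1\omega^1(\beta,\alpha)$ is visible on the face of the formula, since $\omega^1(\alpha)\omega^1(\beta)=\omega^1(\beta)\omega^1(\alpha)$ and $\alpha\rootp\beta=\beta\rootp\alpha$, so $\mathsf{d}^1\omega^1\in C_+^2(\roots,\Mon(\mb{I}_1,\ldots,\mb{I}_k))$. Then I would substitute the expression for $\mathsf{d}^1\omega^1$ into the formula for $\mathsf{d}^2$; both numerator and denominator of $\mathsf{d}^2\mathsf{d}^1\omega^1(\alpha,\beta,\gamma)$ telescope to $\omega^1(\alpha)\omega^1(\beta)\omega^1(\gamma)/\omega^1(\alpha\rootp\beta\rootp\gamma)$, so the quotient is $1$ whenever the relevant roots exist. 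Hence $\mathsf{d}^1\omega^1\in Z_+^2(\roots,\Mon(\mb{I}_1,\ldots,\mb{I}_k))$, and applying the preceding Theorem delivers a Lie algebra of the form (\ref{eq:genCartanWeyl}).

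For the Killing form statement, the conceptual idea is that defining $\tilde{e}_\alpha=\omega^1(\alpha)e_\alpha$ together with $h_r\mapsto h_r$ exhibits $\tilde{\mf{g}}$ as the base-change of $\mf{g}$ to the ring of rational monomials, now in the rescaled weight basis. The key verification step is to recompute the bracket in this change of variables and check that it reproduces (\ref{eq:genCartanWeyl}) with $\omega_+^2=\mathsf{d}^1\omega^1$: from $[\omega^1(\alpha)e_\alpha,\omega^1(\beta)e_\beta]=\omega^1(\alpha)\omega^1(\beta)\,\varepsilon^2(\alpha,\beta)\,e_{\alpha\rootp\beta}$ one gets the coefficient $\mathsf{d}^1\omega^1(\alpha,\beta)\,\varepsilon^2(\alpha,\beta)$ in front of $\tilde{e}_{\alpha\rootp\beta}$, as required. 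Once the identification is in place, bilinearity of the Killing form under extension of scalars immediately gives $K_{\tilde{\mf{g}}}(\tilde{e}_\alpha,\tilde{e}_{\rootm\alpha})=\omega^1(\alpha)\omega^1(\rootm\alpha)\,K_{\mf{g}}(e_\alpha,e_{\rootm\alpha})$.

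The only nontrivial point, and really the main obstacle, is matching $\omega^1(\alpha)\omega^1(\rootm\alpha)$ with $\mathsf{d}^1\omega^1(\alpha,\rootm\alpha)=\omega^1(\alpha)\omega^1(\rootm\alpha)/\omega^1(0)$, which requires the convention $\omega^1(0)=1$ (equivalently, extending the $1$-cochain to the trivial weight by setting it equal to the identity of $\Mon(\mb{I}_1,\ldots,\mb{I}_k)$). Under this convention, which is forced by consistency with the Cartan bracket $[\tilde{e}_\alpha,\tilde{e}_{\rootm\alpha}]$ already appearing in (\ref{eq:genCartanWeyl}), the identity $K_{\tilde{\mf{g}}}(\tilde{e}_\alpha,\tilde{e}_{\rootm\alpha})=\mathsf{d}^1\omega^1(\alpha,\rootm\alpha)K_{\mf{g}}(e_\alpha,e_{\rootm\alpha})$ follows. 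I would make this convention explicit at the outset of the proof, since the rest is a direct check.
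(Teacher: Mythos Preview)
The paper does not supply a proof of this corollary; it is stated immediately after Definition~\ref{def:oneform} and treated as a direct consequence of the definitions and Theorem~1.1. Your proposal correctly spells out precisely the verifications the paper leaves implicit: the symmetry of $\mathsf{d}^1\omega^1$ is evident from the formula, closedness is the telescoping identity $\mathsf{d}^2\mathsf{d}^1=1$, and the Killing form computation follows from recognising $\tilde{\mf{g}}$ as the base change $\mf{g}\otimes R$ viewed in the rescaled basis $\tilde{e}_\alpha=\omega^1(\alpha)e_\alpha$, after which bilinearity gives the factor $\omega^1(\alpha)\omega^1(\rootm\alpha)$.

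Your remark about the convention $\omega^1(0)=1$ is well placed. In the paper's formal setup (Section~\ref{sec:CohomRoot}) a chain with a zero constituent is declared to be the zero chain $0_n$, and cochains are $\mb{Z}$-linear, so $\omega^1(0)=0$ additively, i.e.\ $1$ multiplicatively; this is indeed what makes $\mathsf{d}^1\omega^1(\alpha,\rootm\alpha)=\omega^1(\alpha)\omega^1(\rootm\alpha)$. Since Section~1 precedes that formalism, making the convention explicit, as you propose, is the right thing to do. In short, your argument is correct and is exactly the expansion of what the paper takes for granted.
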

One of the fundamental questions is thus whether there is always a model. This is equivalent to the question whether the second cohomology group \(H_+^2(\roots,\Mon(\mb{I}_1,\cdots,\mb{I}_k))\) is trivial -- see Section \ref{sec:acyclicity}.

This paper is organised as follows: in Section \ref{sec:CohomRoot} we introduce the root systems cohomology. We define in particular $n$-links and $n$-chains as well as (co)boundary operators. We then introduce  a number of concepts, namely cup product (Section \ref{sec:cupproduct}), reversal symmetry (Section \ref{sec:reversalsymmetry}) and equivariance with respect to automorphisms of the root system (Section \ref{sec:equivariance}), before going to our main result, the acyclicity of the second cohomology in Section \ref{sec:acyclicity}.
These sections, in particular Section \ref{sec:cupproduct}, are presented mainly to demonstrate that the theory is complete, and possesses all the desired features. The cup product, for example, provides a strong motivation for cohomology theory (in comparison with homology theory), since its product structure allows one to look for ring-generators of low degree. While it does not have an immediate application, it can be defined also in this context, as we show in Section \ref{sec:cupproduct}. The reversal symmetry in Section \ref{sec:reversalsymmetry} is rather fundamental
and it allows us to systematically develop the theory. 
From the point of view of applications, Section \ref{sec:acyclicity} contains a fundamental result, namely the triviality of \(H_+^2(\roots,\Mon(\mb{I}_1,\cdots,\mb{I}_k))\), and it also contains some illustrative examples (Section \ref{sec:examples}). 
In Section \ref{sec:diagonal contractions and cartan filtrations} we discuss a different application of the cohomology theory, namely we apply it to contractions and filtrations of Lie algebras, where the acyclicity at dimension two plays a fundamental role as well. The final section contains open problems and perspectives for future work.

\section{ Cohomology of root systems} 
\label{sec:CohomRoot}
Let \(\roots\) be a reduced root system, that is a root system satisfying the additional property that the only multiples of a root \(\alpha\) in \(\roots\) are $\pm \alpha$, 
and let $\roots_0=\roots\cup\{0\}$. Let $\roots^+$ be a subset of positive roots and let $\sroots$ be the set of the corresponding simple roots, that is, positive roots which cannot be written as the sum of two elements of $\roots^+$. We denote the addition in the root system by \(\rootp\) to discern it from the formal addition (denoted by $+$) used in the definition of chains that is to follow.
We define  $n$-links \(T_n(\roots)\) inductively as the set of expressions 
\begin{align*}
&\lbr r_1 \br r_2\br\cdots\br r_n\rbr,\quad r_j \in\roots_0,\quad j=1,\ldots,n,\quad\text{such that}
\\ 
&\lbr r_1 \br \cdots \br r_{i-1} \br r_i\rootp r_{i+1} \br r_{i+2} \br\cdots \br r_n \rbr\in T_{n-1}(\roots), \quad i=1,\ldots,n-1,
\end{align*}
with \(T_1(\roots)=\roots_0\). 
This is done to avoid the possibility that in applying the boundary operator as defined in Section \ref{sec:differential}, 
we try to add roots the sum of which is not a root (and this
again is related to the fact that for such roots the bracket of their weight vectors will be zero). In the formula above the link is zero (denoted by \(0_n\in T_n(\roots)\)) if one of its constituents is zero, as happens when opposite roots are added.
The old-fashioned \(\br\)-notation (cf \cite{MR0056295}, later replaced by \(\otimes\)) is used here because the modern notation would seem to imply things like linearity with respect to \(\rootp\) which is not at all the case. The \(\lbr\) and \(\rbr\) are added for readability, usually when \(n>1\) but also sometimes in expressions like \(\lbr r_0\rootp r_1 \rbr\).

Although we will use the same formulas as in group cohomology theory in the definition of the (co)boundary operators, it should be noticed that the root system \(\roots_0\) is not a group, but a groupoid.
Thus root cohomology is a branch of groupoid cohomology. 
Our main result, Theorem \ref{thm:H^2_+=0}, relies on properties of root systems and is therefore not likely to appear in a general treatment of groupoid cohomology. For future research on ALiAs, such as the study of their representations or the investigation of ALiAs with non-inner reduction group, it is also interesting to generalise the root system to the groupoid of weights of representations of semisimple Lie algebras (in which case weight cohomology would be a more appropriate name).

Let the set of $n$-chains \(C_n(\roots)\) be the \(\mb{Z}\)-linear span of the links \(T_n(\roots)\) and the $n$-cochains the maps \(C^n(\roots,M)=C^1(C_n(\roots),M)\), where \(C^1(\square,M)\) are the \(\bbbz\)-linear 1-forms
with values in the \(\mb{Z}\)-module \(M=R^k\), where \(R\) is a ring. Notice that in our applications to ALiAs (e.g. see Section \ref{sec:examples}) we would rather like to restrict to a semiring, \(\mb{N}_0\), but for the cohomological computations we will need a ring to compute in.
\begin{Example}
\label{ex:chains on A2}
Let $\roots$ be the root system of type $A_2$. By abuse of notation we write \(\roots=A_2\). Let $\sroots=\{\alpha_1,\alpha_2\}$ and 
let \(r_0=\alpha_1,\,r_1=\alpha_1\rootp\alpha_2,\,r_2=\alpha_2,\,r_3=\rootm\alpha_1,\,r_4=\rootm\alpha_1\rootm\alpha_2,\,r_5=\rootm\alpha_2\).\\
Then
 \[
 T_1(A_2)=\{0_1,r_0,r_1,r_2,r_3,r_4,r_5\},
 \] 
\begin{align*}
T_2(A_2)=\{0_2,\,&\lbr r_0\br r_2\rbr,\lbr r_0\br r_3\rbr,\lbr r_0\br r_4\rbr,\lbr r_1\br r_3\rbr ,\lbr r_1\br r_4\rbr ,\lbr r_1\br r_5\rbr,\lbr r_2\br r_0\rbr,\lbr r_2\br r_4\rbr,\lbr r_2\br r_5\rbr,
\\&\lbr r_3\br r_0\rbr,\lbr r_3\br r_1\rbr,\lbr r_3\br r_5\rbr,\lbr r_4\br r_0\rbr,\lbr r_4\br r_1\rbr,\lbr r_4\br r_2\rbr,\lbr r_5\br r_1\rbr,\lbr  r_5\br r_2\rbr,\lbr  r_5\br r_3\rbr\}.
\end{align*}
\begin{figure}[h!]
\caption{The links $T_2(A_2)$ depicted by \colorchain\, arrows.}
\label{fig:C_2(A_2)}
\begin{center}
\begin{tikzpicture}[scale=\scaleAA]
  \path node at ( 0,0) [root,label=270: $ $] (zero) {$ $}	
	node at ( 4,0) [root,draw,label=0: $r_0{=}\alpha_1$] (one) {$ $}
  	node at ( 2,3.464) [root,draw,label=60: $r_1$] (two) {$ $}
  	node at ( -2,3.464) [root,draw,label=120: $r_2{=}\alpha_2$] (three) {$ $}
	node at ( -4,0) [root,draw,label=180: $r_3$] (four) {$ $}
	node at ( -2,-3.464) [root,draw,label=240: $r_4$] (five) {$ $}
	node at ( 2,-3.464) [root,draw,label=300: $r_5$] (six) {$ $};
	\draw[chain] (one.150) to node []{$ $} (four.30);
	\draw[chain] (four.330) to node []{$ $} (one.210);
	\draw[chain] (two.210) to node []{$ $} (five.90);
	\draw[chain] (five.30) to node []{$ $} (two.270);
	\draw[chain] (three.270) to node []{$ $} (six.150);
	\draw[chain] (six.90) to node []{$ $} (three.330);

	\draw[chain] (one.120) to node []{$ $} (three.0);
	\draw[chain] (three.300) to node [sloped,above]{$ $} (one.180);
	\draw[chain] (two.180) to node [sloped,above]{$ $} (four.60);
	\draw[chain] (four.0) to node [sloped,above]{$ $} (two.240);
	\draw[chain] (three.240) to node [sloped,above]{$ $} (five.120);
	\draw[chain] (five.60) to node [sloped,above]{$ $} (three.300);
	\draw[chain] (four.300) to node [sloped,above]{$ $} (six.180);
	\draw[chain] (six.120) to node [sloped,above]{$ $} (four.0);
	\draw[chain] (five.0) to node [sloped,above]{$ $} (one.240);
	\draw[chain] (one.180) to node [sloped,above]{$ $} (five.60);
	\draw[chain] (six.60) to node [sloped,above]{$ $} (two.300);
	\draw[chain] (two.240) to node [sloped,above]{$ $} (six.120);
\end{tikzpicture}
\end{center}
\end{figure}
\end{Example}

\subsection{Differential}\label{sec:differential}
\newcommand\dd{\mathsf{d}}
From here onwards, everything is written additively, to increase readability, in contrast to the multiplicative notation used in Section \ref{sec:intro}. One can then define \(\mathsf{\partial}^n:T_{n+1}(\roots)\rightarrow C_{n}(\roots)\) in the usual manner \cite{harrison1962commutative}.
Here we give the first instances, followed by the general formula:
\begin{align*}
\mathsf{\partial}^0 r_0&=0, \\
\mathsf{\partial}^1 \lbr r_0\br r_1\rbr&= r_1-\lbr r_0\rootp r_1\rbr+r_0,\\
\mathsf{\partial}^2 \lbr r_0\br r_1\br r_2\rbr&= \lbr r_1\br r_2\rbr-\lbr r_0\rootp r_1\br r_2\rbr+\lbr r_0\br r_1\rootp r_2\rbr-\lbr r_0\br r_1\rbr,\\
\mathsf{\partial}^{n} \lbr r_0\br \cdots\br r_n\rbr&=\lbr r_1\br \cdots\br r_n\rbr+\sum_{j=1}^n (-1)^{j} \lbr r_0\br \cdots\br r_{j-1}\rootp r_j\br \cdots\br r_n\rbr -(-1)^n \lbr r_0\br \cdots\br r_{n-1}\rbr.
\end{align*}
The boundary operator \(\mathsf{\partial}^{n}\) can be extended by linearity to a map \(\mathsf{\partial}^n:C_{n+1}(\roots)\rightarrow C_{n}(\roots)\) and thus, since it follows from the usual computation that \(\mathsf{\partial}^{n}\mathsf{\partial}^{n+1}=0\), one obtains a chain complex.

The coboundary \(\mathsf{d}^n:C^n(\roots,M)\rightarrow C^{n+1}(\roots,M)\) is defined 
by \[\mathsf{d}^n \omega^n(r_0 \ibr \cdots \ibr r_n)=\omega^n(\mathsf{\partial}^{n} \lbr r_0\br \cdots\br r_n\rbr)\] or, explicitly on the generators,
\begin{align*}
\mathsf{d}^0 \omega^0(r_0)&=0,\\
\mathsf{d}^1 \omega^1(r_0 \ibr r_1)&= \omega^1(r_1)-\omega^1(r_0\rootp r_1)+\omega^1(r_0),\\
\mathsf{d}^2 \omega^2(r_0 \ibr r_1 \ibr r_2)&= \omega^2(r_1 \ibr r_2)-\omega^2(r_0\rootp r_1 \ibr r_2)+\omega^2(r_0 \ibr r_1\rootp r_2)-\omega^2(r_0 \ibr r_1),
\end{align*}
\begin{align*}
\mathsf{d}^n \omega^n(r_0 \ibr \cdots \ibr r_n)&=\omega^n(r_1 \ibr \cdots \ibr r_n)\\
&\phantom{=}+\sum_{j=1}^n (-1)^{j} \omega^n(r_0 \ibr \cdots \ibr r_{j-1}\rootp r_j \ibr \cdots \ibr r_n)-(-1)^n\omega^n(r_0 \ibr \cdots \ibr r_{n-1}).
\end{align*}
That \(\mathsf{d}^{n+1}\mathsf{d}^n=0\) follows from the definition and the fact that \(\partial^n\) is a boundary operator.
We also want \(\mathsf{\partial}^{n} 0_{n+1}=0_{n}\). Let us check that this is indeed the case:
if \(r_0=0\), then 
\[
\partial^n \lbr 0\br r_1\br \cdots\br r_n\rbr=\lbr r_1\br\cdots\br r_i\rbr-\lbr 0\rootp r_1\br \cdots\br r_n\rbr=0_{n}\,.
\]
If \(r_k=0, k=1,\ldots,n-1\) then
\begin{multline*}
\partial^n \lbr r_0\br \cdots\br r_{k-1}\br 0\br r_{k+1}\br \cdots\br r_n\rbr
\\=(-1)^{k+1}\lbr r_0\br \cdots\br r_{k-1}\rootp 0\br \cdots\br r_n\rbr+(-1)^k \lbr r_0\br \cdots\br 0\rootp r_{k+1}\br \cdots\br r_n\rbr=0_{n}
\end{multline*}
and if \(r_n=0\), then 
\[
\partial^n \lbr r_0\br \cdots\br r_{n-1}\br 0\rbr=(-1)^n \lbr r_0\br \cdots\br r_{n-1}\rootp 0\rbr-(-1)^n \lbr r_0\br \cdots\br r_{n-1}\rbr=0_{n}\,.
\]
In the following sections we consider a number of concepts, namely cup product (Section \ref{sec:cupproduct}), reversal symmetry (Section \ref{sec:reversalsymmetry}) and equivariance with respect to automorphisms of the root system (Section \ref{sec:equivariance}),
before going to our main result, the acyclicity of the second cohomology in Section \ref{sec:acyclicity}.
These sections can all be read independently, with the exception of Section \ref{sec:revcup}, as it depends on Section \ref{sec:cupproduct} and of Section \ref{sec:acyclicity}, as it depends on Section \ref{sec:reversalsymmetry}.

\section{Cup product}\label{sec:cupproduct}
The cup product provides a strong motivation for cohomology theory, compared to homology theory, since its product structure turns a cohomology space into  a graded ring. In this section we define the cup product in the context of root systems cohomology, giving $C^\bullet(\roots,M)$ a ring structure. 
\begin{Definition}
The cup product of two forms is defined by
\[
\omega^p\cup\omega^q(r_1 \ibr \cdots \ibr r_{p+q})=\omega^p(r_1 \ibr \cdots \ibr r_p)\omega^q(r_{p+1} \ibr \cdots \ibr r_{p+q}),\quad \lbr r_1\br\cdots\br r_{p+q}\rbr\in T_{p+q}(\roots)\,.
\]
\end{Definition}
\begin{Remark}
The multiplication here is the multiplication as defined in Definition \ref{def:Mon} (or, more abstractly, if \(M=R^k\), the multiplication in \(R\)). One has to multiply the powers of \(\mb{I}\) to get the right result: with \(\omega^1(r_0)=1\) and \(\omega^1(r_1)=\mb{I}\),
   then \(\omega^1\cup\omega^1(r_0 \ibr r_1)=1\), if \(\lbr r_0\br r_1\rbr \in T_2(\roots)\),
whereas \(\omega^1(r_0)+\omega^1(r_1)=\mb{I}\) (the powers are added here).
\end{Remark}
It is clear that \(\lbr r_1\br\cdots\br r_{p}\rbr\in T_{p}(\roots)\) and \(\lbr r_{p+1}\br\cdots\br r_{p+q}\rbr\in T_{q}(\roots)\) if \(\lbr r_1\br\cdots\br r_{p+q}\rbr\in T_{p+q}(\roots)\), since there are less sums to check for existence
in each of the partial terms.
\begin{Lemma}
The following product rule holds:
\[
\dd^{p+q}\omega^p\cup\omega^q
=\dd^p\omega^p\cup\omega^q
+(-1)^{p} \omega^p\cup\dd^q \omega^q.
\]
\end{Lemma}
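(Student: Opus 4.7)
The proof is the standard cochain-level Leibniz computation, adapted to the present chain complex; the only things to be careful about are the index bookkeeping and the fact that the contracted entry $r_{j-1}\rootp r_j$ may straddle the boundary between the two blocks of arguments.

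The plan is to evaluate both sides on a chain $\lbr r_0\br\cdots\br r_{p+q}\rbr\in T_{p+q+1}(\roots)$ and match terms. First I would expand
\[
\dd^{p+q}(\omega^p\cup\omega^q)(r_0\ibr\cdots\ibr r_{p+q})
\]
using the general coboundary formula, then apply the cup product definition to each of the resulting $p+q+2$ summands. The sum over $j=1,\ldots,p+q$ splits naturally in two: for $j=1,\ldots,p$ the contracted slot $r_{j-1}\rootp r_j$ lies in the first block of $p$ arguments, so the summand factors as
\[
(-1)^j\omega^p(r_0\ibr\cdots\ibr r_{j-1}\rootp r_j\ibr\cdots\ibr r_p)\,\omega^q(r_{p+1}\ibr\cdots\ibr r_{p+q}),
\]
while for $j=p+1,\ldots,p+q$ it lies in the last block and factors as
\[
(-1)^j\omega^p(r_0\ibr\cdots\ibr r_{p-1})\,\omega^q(r_p\ibr\cdots\ibr r_{j-1}\rootp r_j\ibr\cdots\ibr r_{p+q}).
\]
Setting $k=j-p$ in the second sum introduces exactly the sign $(-1)^{p+k}$.

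Next I would expand the right-hand side. Using the definition of $\cup$ and the coboundary formula applied separately to $\omega^p$ and $\omega^q$,
\[
(\dd^p\omega^p\cup\omega^q)(r_0\ibr\cdots\ibr r_{p+q})=\dd^p\omega^p(r_0\ibr\cdots\ibr r_p)\cdot\omega^q(r_{p+1}\ibr\cdots\ibr r_{p+q}),
\]
\[
(-1)^p(\omega^p\cup\dd^q\omega^q)(r_0\ibr\cdots\ibr r_{p+q})=(-1)^p\omega^p(r_0\ibr\cdots\ibr r_{p-1})\cdot\dd^q\omega^q(r_p\ibr\cdots\ibr r_{p+q}).
\]
Expanding $\dd^p\omega^p$ and $\dd^q\omega^q$ produces exactly the two split sums found on the left, together with two extra boundary contributions, namely $-(-1)^p\omega^p(r_0\ibr\cdots\ibr r_{p-1})\omega^q(r_{p+1}\ibr\cdots\ibr r_{p+q})$ coming from the tail of $\dd^p\omega^p$ and $+(-1)^p\omega^p(r_0\ibr\cdots\ibr r_{p-1})\omega^q(r_{p+1}\ibr\cdots\ibr r_{p+q})$ coming from the head of $\dd^q\omega^q$.

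The crux of the verification is that these two spurious terms cancel, which is exactly the reason for the sign $(-1)^p$ in the Leibniz rule; after this cancellation, the surviving head term $\omega^p(r_1\ibr\cdots\ibr r_p)\omega^q(r_{p+1}\ibr\cdots\ibr r_{p+q})$ and tail term $-(-1)^{p+q}\omega^p(r_0\ibr\cdots\ibr r_{p-1})\omega^q(r_p\ibr\cdots\ibr r_{p+q-1})$ on each side match directly. The main obstacle is not conceptual but notational: one must be careful that the contracted slot $r_{j-1}\rootp r_j$ is well-defined as a root system element precisely when $\lbr r_0\br\cdots\br r_{p+q}\rbr\in T_{p+q+1}(\roots)$, and that after contraction the remaining chain still lies in $T_{p+q}(\roots)$, so that all invocations of $\omega^p$ and $\omega^q$ make sense. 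This is immediate from the inductive definition of $T_n(\roots)$ in Section \ref{sec:CohomRoot}.
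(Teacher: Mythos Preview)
Your proof is correct and follows essentially the same route as the paper: both arguments expand $\dd^{p+q}(\omega^p\cup\omega^q)$ via the coboundary formula, split the interior sum at $j=p$ versus $j\ge p+1$, and then observe that the reorganised pieces assemble into $\dd^p\omega^p\cup\omega^q$ and $(-1)^p\omega^p\cup\dd^q\omega^q$ once the two matching terms $\pm(-1)^p\omega^p(r_0\ibr\cdots\ibr r_{p-1})\omega^q(r_{p+1}\ibr\cdots\ibr r_{p+q})$ cancel. The only cosmetic difference is that the paper manipulates the left-hand side directly into the right-hand side, while you expand both sides separately and compare; your extra remark about well-definedness in $T_{p+q}(\roots)$ is a welcome clarification that the paper leaves implicit.
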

\begin{proof}
\begin{align*}
&\dd^{p+q}\omega^p\cup\omega^q(r_0 \ibr \cdots \ibr r_p \ibr r_{p+1} \ibr \cdots \ibr r_{p+q})
\\&= \omega^p(r_1 \ibr \cdots \ibr r_p)\omega^q(r_{p+1} \ibr \cdots \ibr r_{p+q})
-(-1)^{p+q}\omega^p(r_0 \ibr \cdots \ibr r_{p-1})\omega^q(r_{p} \ibr \cdots \ibr r_{p+q-1})
\\&\phantom{=}+\sum_{j=1}^p (-1)^{j} \omega^p(r_0 \ibr \cdots \ibr r_{j-1}\rootp r_j \ibr \cdots \ibr r_p)\omega^q(r_{p+1} \ibr \cdots \ibr r_{p+q})
\\&\phantom{=}+\sum_{j=p+1}^{p+q} (-1)^{j} \omega^p(r_0 \ibr \cdots \ibr r_{p-1})\omega^q(r_p \ibr \cdots \ibr r_{j-1}\rootp r_j \ibr \cdots \ibr r_{p+q})
\\&=
(-1)^{p}\omega^p(r_0 \ibr \cdots \ibr r_{p-1})\Big[-(-1)^q \omega^q(r_{p} \ibr \cdots \ibr r_{p+q-1})
\\&\phantom{=}+ \sum_{j=1}^{q} (-1)^{j} \omega^q(r_p \ibr \cdots \ibr r_{j+p-1}\rootp r_{j+p} \ibr \cdots \ibr r_{p+q})+\omega^q(r_{p+1} \ibr \cdots \ibr r_{p+q}) \Big]
\\&\phantom{=}+\Big[ \omega^p(r_1 \ibr \cdots \ibr r_p) + \sum_{j=1}^p (-1)^{j} \omega^p(r_0 \ibr \cdots \ibr r_{j-1}\rootp r_j \ibr \cdots \ibr r_p)
\\&\phantom{=}-  (-1)^p \omega^p(r_0 \ibr \cdots \ibr r_{p-1})\Big] 
\omega^q(r_{p+1} \ibr \cdots \ibr r_{p+q})
\\&=\dd^p\omega^p(r_0 \ibr \cdots \ibr r_p)\omega^q(r_{p+1} \ibr \cdots \ibr r_{p+q})
+(-1)^{p} \omega^p(r_0 \ibr \cdots \ibr r_{p-1})\dd^q \omega^q(r_p \ibr \cdots \ibr r_{p+q}).
\\&=\dd^p\omega^p\cup\omega^q(r_{0} \ibr \cdots \ibr r_{p+q})
+(-1)^{p} \omega^p\cup\dd^q \omega^q(r_0 \ibr \cdots \ibr r_{p+q}).
\end{align*}
\end{proof}
\begin{Corollary}
The closed forms form a subring of $C^{\text{\textbullet}}(\roots,M)$ and the exact forms form an ideal within that subring.
\end{Corollary}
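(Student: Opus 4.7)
The plan is to derive the corollary directly from the Leibniz-type product rule established in the preceding lemma, so the argument amounts to unpacking the formula
\[
\dd^{p+q}(\omega^p\cup\omega^q)=\dd^p\omega^p\cup\omega^q+(-1)^p\,\omega^p\cup\dd^q\omega^q
\]
in the two scenarios relevant to the claim. I would begin by noting that $C^{\bullet}(\roots,M)$ is already a graded ring under $\cup$ (associativity being immediate from the definition, and bilinearity from the $\mb{Z}$-linearity of the forms), so both statements reduce to checking that the closed forms $Z^{\bullet}$ are closed under $\cup$ and that the exact forms $B^{\bullet}$ are closed under multiplication from $Z^{\bullet}$ on either side.

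First I would verify that $Z^{\bullet}$ is a subring. Clearly $\dd^n$ is $\mb{Z}$-linear on $C^n$, so $Z^{\bullet}$ is closed under addition and contains the constant $1\in C^0$ (which is obviously closed and acts as a unit for $\cup$). If $\omega^p$ and $\omega^q$ are both closed, the product rule gives $\dd^{p+q}(\omega^p\cup\omega^q)=0$, so $\omega^p\cup\omega^q\in Z^{p+q}$.

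Next I would show that $B^{\bullet}$ is a two-sided ideal in $Z^{\bullet}$. Suppose $\omega^p=\dd^{p-1}\eta^{p-1}$ is exact and $\omega^q\in Z^q$ is closed. Applying the product rule to $\eta^{p-1}\cup\omega^q$ gives
\[
\dd^{p+q-1}\bigl(\eta^{p-1}\cup\omega^q\bigr)=\dd^{p-1}\eta^{p-1}\cup\omega^q+(-1)^{p-1}\eta^{p-1}\cup\dd^q\omega^q=\omega^p\cup\omega^q,
\]
so $\omega^p\cup\omega^q$ is exact. The symmetric case, where $\omega^p\in Z^p$ and $\omega^q=\dd^{q-1}\eta^{q-1}\in B^q$, is essentially the same: the product rule applied to $\omega^p\cup\eta^{q-1}$ yields $\dd^{p+q-1}(\omega^p\cup\eta^{q-1})=(-1)^p\omega^p\cup\omega^q$ because $\dd^p\omega^p=0$, so $\omega^p\cup\omega^q=(-1)^p\,\dd^{p+q-1}(\omega^p\cup\eta^{q-1})$ is again exact (up to a sign that does not affect exactness). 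Additive closure of $B^{\bullet}$ follows from the linearity of $\dd$, which completes the proof.

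There is no real obstacle here; the only item requiring any care is the bookkeeping of the sign $(-1)^p$ appearing in the Leibniz rule when handling the right ideal property, and observing that multiplication by $\pm 1$ preserves the property of being a coboundary.
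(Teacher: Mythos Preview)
Your proposal is correct and is exactly the standard argument the paper intends: the corollary is stated without proof, as an immediate consequence of the Leibniz rule in the preceding lemma, and you have simply written out the obvious details (closure of $Z^\bullet$ under $\cup$, and the two-sided ideal property for $B^\bullet$ via the product rule applied to $\eta\cup\omega$ and $\omega\cup\eta$). There is nothing to add.
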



\section{Reversal symmetry}\label{sec:reversalsymmetry}
Notice that when  \(\lbr r_1 \br r_2\br\cdots\br r_n\rbr\) is in \(T_n(\roots)\) then its opposite
\[
 \rho^n\lbr r_1 \br r_2\br\cdots\br r_n\rbr=\lbr r_n \br r_{n-1}\br\cdots\br r_1\rbr\,,
 \] 
 is in  \(T_n(\roots)\) as well, so \(\rho^n\) is an involution on \(T_n(\roots)\).
We can in fact make a stronger statement.
The symmetric group $S_n$ acts on the product $\roots_0\times\ldots\times\roots_0$ by permuting the $n$ factors.
To the subset $T_n(\roots)$ of this product is associated a stabiliser subgroup of $S_n$ which turns out to be generated by the reversal permutation $\rho^n$. We begin by proving this fact (Lemma \ref{lem:stabiliser generated by reversal}) and subsequently we use the reversal permutation to split the cochain complex into a symmetric and antisymmetric part. It is the symmetric part that is related to ALiAs.

We start by describing a few links, using a series of subsets $U_n$, $V_n$, $W_n$ of $(\roots_0)^n$, $n\ge 1$. 
Let 
$U_n$ consist of all elements
\[u_{n,j}=[\alpha | \rootm \alpha | \cdots| \rootpm\alpha|\rootpm\beta|\rootmp(\alpha\rootp\beta)|\rootpm\alpha|\cdots|  (\rootm 1)^{n} \alpha]\]
where $\alpha,\beta,\alpha\rootp\beta\in\roots$. The subscript $j\in\{1,\ldots,n\}$ denotes the position of $\rootpm\beta$.
Let $V_n$ consist of all elements
\[v_{n,j}=[\alpha | \rootm \alpha | \cdots| \rootpm\alpha|\rootpm\beta|\rootmp\beta|\rootmp\alpha|\cdots|  (\rootm 1)^{n-1} \alpha]\]
where again $\alpha,\beta,\alpha\rootp\beta\in\roots$ and $j\in\{1,\ldots,n\}$ denotes the position of $\rootpm\beta$.
Let $W_n$ consist of all elements
\[w_{n}=[\alpha | \rootm \alpha |\cdots|  (\rootm 1)^{n-1} \alpha]\]
where $\alpha\in\roots$.
Define \[X_n=U_{n}\cup V_{n}\cup W_{n}\cup\{0_{n}\}.\]
\begin{Lemma}
\label{lem:chains a b c}
$X_n\subset T_n(\roots)$.
\end{Lemma}
\begin{proof}
For $i=1,\ldots,n-1$, define \[f_i([r_1|\ldots|r_n])=[r_1|\ldots|r_{i-1}|r_i\rootp r_{i+1}|r_{i+2}|\ldots|r_{n}]\] just to give a name to the operation used in the definition of $T_n(\roots)$.
We find that
\begin{align*}
f_i (a_{n,j})&=0_{n-1},\quad i=1\ldots,j-2,j+2,\ldots,n-1,
\\f_{j-1} (a_{n,j})&\in V_{n-1},
\quad f_{j} (a_{n,j})\in W_{n-1},
\quad f_{j+1} (a_{n,j})\in V_{n-1},
\\[2mm]
f_i (b_{n,j})&=0_{n-1},\quad i=1\ldots,j-2,j,j+2,\ldots,n-1,
\\f_{j-1} (b_{n,j})&\in U_{n-1},
\quad f_{j+1} (b_{n,j})\in U_{n-1},
\\[2mm]
f_i (c_{n,j})&=0_{n-1},\quad i=1\ldots n-1.
\end{align*}
For $n=1$ the statement of the lemma is trivial. The general statement follows by induction, because $f_i(X_n)\subset  X_{n-1}$ for $i=1,\ldots,n-1$.
\end{proof}

\begin{Lemma}
\label{lem:stabiliser generated by reversal}
Let a permutation $\pi\in S_n$ act on a tuple \([r_1 | r_2|\cdots| r_n]\) by permuting the factors: \[\pi\cdot [r_1 | r_2|\cdots| r_n]=[r_{\pi^{-1}1} | r_{\pi^{-1}2}|\cdots| r_{\pi^{-1}n}].\] The subgroup of $S_n$ preserving $T_n(\roots)$ consists only of the identity permutation $e^n$ and the reversal permutation $\rho^n:i\mapsto n-(i-1)$.
\end{Lemma}
\begin{proof}
First we argue that the permutations $\{e^n,\rho^n\}$ are precisely those permutations $\pi$ such that 
\begin{equation}
\label{eq:reversal symmetry condition}
|\pi i - \pi (i+1)|=1,\quad 1\le i \le n-1.
\end{equation}
It is clear that $e^n$ and $\rho^n$ satisfy (\ref{eq:reversal symmetry condition}). Now let $\pi$ be a permutation satisfying  (\ref{eq:reversal symmetry condition}). Let $\pi 1 =j$. Then $\pi 2 \in\{j+1,j-1\}$. If $\pi 2=j+1$, then $\pi 3=j+2$ and $\pi i=j+{i-1}$. It follows that $j=1$ and $\pi=e^n$. If on the other hand $\pi 2=j-1$, then $\pi i=j-(i-1)$ and it follows that $j=n$ and $\pi=\rho^n$. 
 
We turn to the stabiliser of $T_n(\roots)$ in $S_n$. Notice first that $e^nT_n(\roots)=T_n(\roots)$ and $\rho^nT_n(\roots)=T_n(\roots)$. Now take another permutation $\pi\in S_n$. Then there is $1\le i\le n-1$ such that $|\pi i - \pi (i+1)|>1$. 
Suppose that $|\pi i - \pi (i+1)|$ is even. 
Then $\pi^{-1} c_n$ has at positions $i$ and $i+1$ two factors of $c_n$ at even distance of each other. Given that $c_n$ has alternating factors $\alpha$ and $\rootm\alpha$, we see that $f_i\pi^{-1} c_n$ has a factor $\alpha\rootp\alpha$ or $\rootm\alpha\rootm\alpha$ at position $i$. This factor is therefore not in $\roots_0$ hence $\pi^{-1} c_n\notin T_n(\roots)$. But $c_n\in T_n(\roots)$ by Lemma $\ref{lem:chains a b c}$ so we conclude that $\pi^{-1}$, and hence $\pi$, is not an element of $(S_n)_{T_n(\roots)}$.

Suppose now that $|\pi i - \pi (i+1)|$ is odd and greater than $1$. Choose $j=\min\{\pi i, \pi (i+1)\}+1$. Then $\pi^{-1} a_{n,j}$ has at position $i$ and $i+1$ factors on different sides of the two factors in $a_{n,j}$ involving $\beta$, and at odd distance from each other. Therefore, $f_i\pi^{-1} a_{n,j}$ has a factor $\alpha\rootp\alpha$ or $\rootm\alpha\rootm\alpha$ at position $i$.
Now the same reasoning as above shows that $\pi\notin (S_n)_{T_n(\roots)}$.
\end{proof}
We turn to the interplay between the cochain complex and the reversal permutation.
Let \(\rho^n \omega^n(\phi) =\omega^n(\rho^n(\phi))\). 
\begin{Definition}
Let \(\kappa_n={n+1\choose 2}+1\).
Let \(\hat{\rho}^n=(-1)^{\kappa_n}\rho^n\).
We say that \(\omega^n\) is \emph{symmetric} if \(\hat{\rho}^n\omega^n=\omega^n\) and \emph{antisymmetric} if \(\hat{\rho}^n\omega^n=-\omega^n\) (cf. Remark \ref{rem:antisymmetry}).
Let \(C_{\pm}^n(\roots,M)\) consist of those \(\omega^n\in C^n(\roots,M)\) such that \(\hat{\rho}^n \omega^n=\pm\omega^n\).
\end{Definition}
\begin{Lemma}[{\cite[Section 3]{gerstenhaber1987hodge}}]
\label{lem:reversal symmetry}
One has \(\hat{\rho}^{n+1}\mathsf{d}^n=\mathsf{d}^n \hat{\rho}^n \). 
\end{Lemma}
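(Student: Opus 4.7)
The plan is to deduce the lemma from a chain-level identity between $\partial^n$ and $\rho^n$, then dualize to cochains, and finally use the combinatorial relation $\kappa_{n+1}-\kappa_n = n+1$ to see that the sign produced at the chain level is exactly absorbed by the definition $\hat\rho^n=(-1)^{\kappa_n}\rho^n$.

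Step one: I would establish that, as $\bbbz$-linear maps $C_{n+1}(\roots)\to C_n(\roots)$,
$$\partial^n\rho^{n+1}=(-1)^{n+1}\rho^n\partial^n.$$
This is checked on a generator $\lbr r_0\br\cdots\br r_n\rbr$ by expanding both sides with the explicit boundary formula. The two boundary terms $\lbr r_1\br\cdots\br r_n\rbr$ and $-(-1)^n\lbr r_0\br\cdots\br r_{n-1}\rbr$ produced by $\partial^n$ get exchanged after applying $\rho$, and the swap carries exactly the relative sign $(-1)^{n+1}$. For the middle sum, the re-indexing $j\mapsto n-j+1$ pairs the summand $(-1)^j\lbr r_0\br\cdots\br r_{j-1}\rootp r_j\br\cdots\br r_n\rbr$ from $\rho^n\partial^n$ with the summand $(-1)^{n-j+1}\lbr r_n\br\cdots\br r_{j}\rootp r_{j-1}\br\cdots\br r_0\rbr$ from $\partial^n\rho^{n+1}$. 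The two chains are reversals of one another, and the signs differ by $(-1)^{n-2j+1}=(-1)^{n+1}$, completing the identity.

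Step two: I would dualize. Using $(\dd^n\omega^n)(c)=\omega^n(\partial^n c)$ for $c\in T_{n+1}(\roots)$ and the definition $(\rho^m\eta)(c')=\eta(\rho^m c')$ on cochains, the identity of step one immediately yields
$$\rho^{n+1}\dd^n=(-1)^{n+1}\dd^n\rho^n.$$

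Step three: I absorb the sign. Since $\kappa_{n+1}-\kappa_n=\binom{n+2}{2}-\binom{n+1}{2}=n+1$, multiplying the previous identity by $(-1)^{\kappa_{n+1}}$ on the left gives
$$\hat\rho^{n+1}\dd^n=(-1)^{\kappa_{n+1}+n+1}\dd^n\rho^n=(-1)^{\kappa_n}\dd^n\rho^n=\dd^n\hat\rho^n,$$
which is the lemma. The only genuinely delicate step is the bookkeeping in step one, where one must correctly identify the two orderings of the summed pair (counted from either end of the reversed chain) and track the alternating sign against the re-indexing; everything else is mechanical. The choice $\kappa_n=\binom{n+1}{2}+1$ is tuned precisely so that its parity jumps by $n+1$ between consecutive $n$, thereby cancelling the sign $(-1)^{n+1}$ that naturally arises from interchanging reversal and boundary.
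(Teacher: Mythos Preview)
Your proof is correct and follows essentially the same approach as the paper: both rely on the re-indexing $j\mapsto n-j+1$ in the middle sum, the swap of the two boundary terms under reversal, and the parity relation between $\kappa_n$ and $\kappa_{n+1}$. The only difference is organizational: you first establish the chain-level identity $\partial^n\rho^{n+1}=(-1)^{n+1}\rho^n\partial^n$ and then dualize, whereas the paper carries out the same manipulations directly on $\hat\rho^{n+1}\dd^n\omega^n(r_0\ibr\cdots\ibr r_n)$ at the cochain level (and phrases the sign relation as $\kappa_n+\kappa_{n+1}\equiv n+1\bmod 2$ rather than your equivalent $\kappa_{n+1}-\kappa_n=n+1$).
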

\begin{Example}
\begin{align*}
\mathsf{d}^1 \omega^1(r_0 \ibr r_1)&= \omega^1(r_1)-\omega^1(r_0\rootp r_1)+\omega^1(r_0)\\
\hat{\rho}^2\mathsf{d}^1 \omega^1(r_0 \ibr r_1)&= \omega^1(r_1)-\omega^1(r_0\rootp r_1)+\omega^1(r_0)
\\&=
\hat{\rho}^1(\omega^1(r_1)-\omega^1(r_0\rootp r_1)+\omega^1(r_0))=\mathsf{d}^1 
\hat{\rho}^1 \omega^1(r_0 \ibr r_1)\\
\mathsf{d}^2 \omega^2(r_0 \ibr r_1 \ibr r_2)&= \omega^2(r_1 \ibr r_2)-\omega^2(r_0\rootp r_1 \ibr r_2)+\omega^2(r_0 \ibr r_1\rootp r_2)-\omega^2(r_0 \ibr r_1)\\
\hat{\rho}^3\mathsf{d}^2 \omega^2(r_0 \ibr r_1 \ibr r_2)&=-( \omega^2(r_1 \ibr r_0)-\omega^2(r_2\rootp r_1 \ibr r_0)+\omega^2(r_2 \ibr r_1\rootp r_0)-\omega^2(r_2 \ibr r_1))
\\&=\hat{\rho}^2 (-\omega^2(r_0 \ibr r_1)+\omega^2(r_0 \ibr r_2\rootp r_1)-\omega^2(r_1\rootp r_0 \ibr r_2)+\omega^2(r_1 \ibr r_2))
\\&=\mathsf{d}^2 \hat{\rho}^2\omega^2(r_0 \ibr r_1 \ibr r_2)
\end{align*}
\end{Example}
\begin{proof}
Notice that \(\kappa_n +\kappa_{n+1}=(n+1)^2+2\equiv n+1 \mod 2\).
\begin{align*}
\hat{\rho}^{n+1}\mathsf{d}^n \omega^n(r_0 \ibr \cdots \ibr r_n)&=(-1)^{\kappa_{n+1}}\Big[\omega^n(r_{n-1} \ibr \cdots \ibr r_0)-(-1)^n\omega^n(r_{n} \ibr \cdots \ibr r_{1})
\\&\phantom{=}+\sum_{j=1}^n (-1)^{j} \omega^n(r_n \ibr \cdots \ibr r_{n-j+2} \ibr r_{n-j+1}\rootp r_{n-j} \ibr r_{n-j-1} \ibr \cdots \ibr r_0)\Big]
\\&=(-1)^{\kappa_{n+1}+\kappa_{n}}\hat{\rho}^n\Big[\omega^n(r_0 \ibr \cdots \ibr r_{n-1})-(-1)^n\omega^n(r_{1} \ibr \cdots \ibr r_{n})
\\&\phantom{=}+\sum_{j=1}^n (-1)^{j} \omega^n(r_0 \ibr \cdots \ibr r_{n-j}\rootp  r_{n-j+1} \ibr \cdots \ibr r_n)\Big]
\\&=(-1)^{n+1}\hat{\rho}^n\Big[\omega^n(r_0 \ibr \cdots \ibr r_{n-1})-(-1)^n\omega^n(r_{1} \ibr \cdots \ibr r_{n})
\\&\phantom{=}+ \sum_{j=1}^n (-1)^{n-j+1} \omega^n(r_0 \ibr \cdots \ibr r_{j}\rootp  r_{j-1} \ibr \cdots \ibr r_n)\Big]
\\&=(-1)^{n+1}(-1)^{n+1}\hat{\rho}^n\big[-(-1)^{n}\omega^n(r_0 \ibr \cdots \ibr r_{n-1})+\omega^n(r_{1} \ibr \cdots \ibr r_{n})
\\&\phantom{=}+ \sum_{j=1}^n (-1)^{j} \omega^n(r_0 \ibr \cdots \ibr r_{j-1}\rootp r_{j} \ibr \cdots \ibr r_n)\Big]
\\&=\mathsf{d}^n \hat{\rho}^n \omega^n (r_0 \ibr \cdots \ibr r_n).
\end{align*}
This proves the statement.
\end{proof}
\begin{Corollary}
\(\mathsf{d}^n\) maps \(C_{\pm}^n(\roots,M)\) to \(C_{\pm}^{n+1}(\roots,M)\).
\end{Corollary}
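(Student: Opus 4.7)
The plan is simply to chain the commutation relation of Lemma \ref{lem:reversal symmetry} with the (anti)symmetry hypothesis. First I would fix an arbitrary $\omega^n \in C_{\pm}^n(\roots,M)$, which by definition means $\hat{\rho}^n \omega^n = \pm\, \omega^n$. The goal is to verify the identity $\hat{\rho}^{n+1} \mathsf{d}^n \omega^n = \pm\, \mathsf{d}^n \omega^n$, which is exactly the statement $\mathsf{d}^n \omega^n \in C_{\pm}^{n+1}(\roots,M)$.

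The key step is to apply Lemma \ref{lem:reversal symmetry}, which gives the intertwining relation $\hat{\rho}^{n+1} \mathsf{d}^n = \mathsf{d}^n \hat{\rho}^n$. Using this, I compute
\[
\hat{\rho}^{n+1} \mathsf{d}^n \omega^n \;=\; \mathsf{d}^n \hat{\rho}^n \omega^n \;=\; \mathsf{d}^n (\pm\, \omega^n) \;=\; \pm\, \mathsf{d}^n \omega^n,
\]
where the last equality uses the $\mathbb{Z}$-linearity of the coboundary $\mathsf{d}^n$ (recall $C^n(\roots,M)$ consists of $\mathbb{Z}$-linear $1$-forms, and $\mathsf{d}^n$ is defined via the linear boundary operator $\partial^n$). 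This establishes the corollary without further case analysis, handling the symmetric and antisymmetric cases simultaneously via the $\pm$ sign.

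There is no real obstacle here, since Lemma \ref{lem:reversal symmetry} has already absorbed the combinatorial content (in particular, the parity computation $\kappa_n + \kappa_{n+1} \equiv n+1 \pmod 2$ that ensures the twisted involution $\hat{\rho}$, and not the untwisted $\rho$, commutes with $\mathsf{d}$). The only thing worth noting in the writeup is that the decoration $\pm$ is to be read consistently throughout: if $\omega^n$ is symmetric one obtains a symmetric $\mathsf{d}^n \omega^n$, and analogously in the antisymmetric case. Consequently, the proof can be written in two or three lines.
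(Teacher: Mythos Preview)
Your proof is correct and is exactly the intended argument: the paper states the Corollary without proof, as an immediate consequence of Lemma~\ref{lem:reversal symmetry}, and your three-line chain $\hat{\rho}^{n+1}\mathsf{d}^n\omega^n=\mathsf{d}^n\hat{\rho}^n\omega^n=\pm\,\mathsf{d}^n\omega^n$ is precisely that consequence.
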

Notice in particular that \(C^1(\roots,M)=C^1_+(\roots,M)\) and \(H^2_-(\roots,M)=Z^2_-(\roots,M)\).
\begin{Example}[$H^2_-(A_2,M)\ne 0$]
\label{ex:H^2_-ne0}
Let $\roots$ be the root system of type $A_2$. As in Example \ref{ex:chains on A2} we number the roots \(r_0=\alpha_1,\,r_1=\alpha_1\rootp\alpha_2,\,r_2=\alpha_2,\,r_3=\rootm\alpha_1,\,r_4=\rootm\alpha_1\rootm\alpha_2,\,r_5=\rootm\alpha_2\) and consider the indices of $r$ as elements in $\Zn{6}$. Notice that $r_i\rootp r_{i+2}= r_{i+1}$, $r_i\rootp r_{i+3}=0$ and $r_i\rootp r_{i+4}= r_{i+5}$.
The $3$-links are the triples $\lbr\alpha\br\beta\br\gamma\rbr$ with the property that $ \alpha\rootp\beta \in\roots_0$, $\beta\rootp\gamma\in\roots_0$ and  $\alpha\rootp\beta\rootp\gamma\in\roots_0$.
The complete set \(T_3(A_2)\) is given by 
\begin{align*}
a_i&=\lbr r_i\br r_{i+2}\br r_{i+4}\rbr,\\
b_i&=\lbr r_i\br r_{i+2}\br r_{i+5}\rbr,\\
c_i&=\lbr r_i\br r_{i+3}\br r_{i+5}\rbr,\\
d_i&=\lbr r_i\br r_{i+3}\br r_{i+1}\rbr=-\hat{\rho}^3 b_i,\\
e_i&=\lbr r_i\br r_{i+4}\br r_{i+1}\rbr=-\hat{\rho}^3 c_i,\\
f_i&=\lbr r_i\br r_{i+4}\br r_{i+2}\rbr=-\hat{\rho}^3 a_i,\quad i\in\Zn{6}.
\end{align*}
Let us define an antisymmetric $2$-cochain $\omega^2_-\in C^2_-(A_2,M)$ by 
\[\omega^2_-(r_i \ibr r_{i+2})=1, \quad\omega^2_-(r_i \ibr r_{i+3})=0,\quad i\in\Zn{6}.\]
\begin{figure}[H]
\caption{An antisymmetric $2$-cocycle on $A_2$. Links sent to values $1$, $-1$ and $0$ are depicted in \colorp, \colorm\;and \colorchain\;respectively. }
\begin{center}
\begin{tikzpicture}[scale=\scaleAA]
  \path node at ( 0,0) [root,label=270: $ $] (zero) {$ $}	
	node at ( 4,0) [root,draw,label=0: $r_0{=}\alpha_1$] (one) {$ $}
  	node at ( 2,3.464) [root,draw,label=60: $r_1$] (two) {$ $}
  	node at ( -2,3.464) [root,draw,label=120: $r_2{=}\alpha_2$] (three) {$ $}
	node at ( -4,0) [root,draw,label=180: $r_3$] (four) {$ $}
	node at ( -2,-3.464) [root,draw,label=240: $r_4$] (five) {$ $}
	node at ( 2,-3.464) [root,draw,label=300: $r_5$] (six) {$ $};

	\draw[chain] (one.150) to node []{$ $} (four.30);
	\draw[chain] (four.330) to node []{$ $} (one.210);
	\draw[chain] (two.210) to node []{$ $} (five.90);
	\draw[chain] (five.30) to node []{$ $} (two.270);
	\draw[chain] (three.270) to node []{$ $} (six.150);
	\draw[chain] (six.90) to node []{$ $} (three.330);

	\draw[cochainp] (one.120) to node []{$ $} (three.0);
	\draw[cochainm] (three.300) to node [sloped,above]{$ $} (one.180);
	\draw[cochainp] (two.180) to node [sloped,above]{$ $} (four.60);
	\draw[cochainm] (four.0) to node [sloped,above]{$ $} (two.240);
	\draw[cochainp] (three.240) to node [sloped,above]{$ $} (five.120);
	\draw[cochainm] (five.60) to node [sloped,above]{$ $} (three.300);
	\draw[cochainp] (four.300) to node [sloped,above]{$ $} (six.180);
	\draw[cochainm] (six.120) to node [sloped,above]{$ $} (four.0);
	\draw[cochainp] (five.0) to node [sloped,above]{$ $} (one.240);
	\draw[cochainm] (one.180) to node [sloped,above]{$ $} (five.60);
	\draw[cochainp] (six.60) to node [sloped,above]{$ $} (two.300);
	\draw[cochainm] (two.240) to node [sloped,above]{$ $} (six.120);
\end{tikzpicture}
\end{center}
\end{figure}
Then $\omega^2_-\in Z^2_-(A_2,M)$. Indeed,
\begin{align*}
\mathsf{d}^2\omega^2_-(a_i)&=\mathsf{d}^2\omega^2_-(r_i \ibr  r_{i+2} \ibr  r_{i+4})\\
&=\omega^2_-(r_{i+2} \ibr  r_{i+4})-\omega^2_-(r_{i+1} \ibr  r_{i+4})+\omega^2_-(r_{i} \ibr  r_{i+3})-\omega^2_-(r_{i} \ibr  r_{i+2})\\
&=1-0+0-1=0,
\end{align*}
\begin{align*}
\mathsf{d}^2\omega^2_-(b_i)&=\mathsf{d}^2\omega^2_-(r_i \ibr  r_{i+2} \ibr  r_{i+5})\\
&=\omega^2_-(r_{i+2} \ibr  r_{i+5})-\omega^2_-(r_{i+1} \ibr  r_{i+5})+\omega^2_-(r_{i} \ibr  0)-\omega^2_-(r_{i} \ibr  r_{i+2})\\
&=0-(-1)+0-1=0,
\end{align*}
\begin{align*}
\mathsf{d}^2\omega^2_-(c_i)&=\mathsf{d}^2\omega^2_-(r_i \ibr  r_{i+3} \ibr  r_{i+5})\\
&=\omega^2_-(r_{i+3} \ibr  r_{i+5})-\omega^2_-(0 \ibr  r_{i+5})+\omega^2_-(r_{i} \ibr  r_{i+4})-\omega^2_-(r_{i} \ibr  r_{i+3})\\
&=1-0-1-0=0.
\end{align*}
That $\mathsf{d}^2\omega^2_-$ vanishes at the remaining links now follows from Lemma \ref{lem:reversal symmetry}, e.g. 
\[
\mathsf{d}^2\omega^2_-(d_i)=\mathsf{d}^2\omega^2_-(-\hat{\rho}^3 b_i)=-\mathsf{d}^2\hat{\rho}^3\omega^2_-(b_i)=-\hat{\rho}^2\mathsf{d}^2\omega^2_-(b_i)=0.
\] 
This example shows the non obvious fact that $Z_-^2(A_2,M)=H_-^2(A_2,M)\ne 0$.
\end{Example}

For our purpose it is sufficient to work with \(C_{+}^{\text{\textbullet}}(\roots,M)\).
Notice that the usual splitting into symmetric and antisymmetric is not immediately applicable here, since it involves division by \(2\),
which means taking square roots in the multiplicative context (see Example \ref{ex:B2}).

\subsection{Reversal symmetry and cup product}\label{sec:revcup}
\begin{Lemma}
\(\hat{\rho}^{p+q}\omega^{p}\cup\omega^q=(-1)^{\kappa_{p+q}+\kappa_{p}+\kappa_{q}}\hat{\rho}^q\omega^q\cup\hat{\rho}^p\omega^p\).
\end{Lemma}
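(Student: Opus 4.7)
The plan is to compute $\hat{\rho}^{p+q}(\omega^p \cup \omega^q)$ directly from the definitions, reduce the statement to a clean relation among the (unhatted) reversals $\rho^p,\rho^q,\rho^{p+q}$ of the cup product, and then absorb the signs at the end.

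First I would evaluate $\rho^{p+q}(\omega^p\cup\omega^q)$ on a general chain $\lbr r_1\br\cdots\br r_{p+q}\rbr\in T_{p+q}(\roots)$. By the definition of $\rho^{p+q}$ and the cup product,
\begin{align*}
\rho^{p+q}(\omega^p\cup\omega^q)(r_1\ibr\cdots\ibr r_{p+q})
&=\omega^p\cup\omega^q(r_{p+q}\ibr\cdots\ibr r_1)\\
&=\omega^p(r_{p+q}\ibr\cdots\ibr r_{q+1})\,\omega^q(r_q\ibr\cdots\ibr r_1).
\end{align*}
Each of these two factors is itself a reversed evaluation, so
$\omega^p(r_{p+q}\ibr\cdots\ibr r_{q+1})=(\rho^p\omega^p)(r_{q+1}\ibr\cdots\ibr r_{p+q})$ and $\omega^q(r_q\ibr\cdots\ibr r_1)=(\rho^q\omega^q)(r_1\ibr\cdots\ibr r_q)$. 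Since the target ring is commutative in the cases of interest (the monomial multiplication of Definition \ref{def:Mon} corresponds to coordinatewise addition in $\mb{Z}^k$), we may swap the factors and recognise the result as $(\rho^q\omega^q)\cup(\rho^p\omega^p)$ evaluated at $\lbr r_1\br\cdots\br r_{p+q}\rbr$. Thus
\[
\rho^{p+q}(\omega^p\cup\omega^q)=(\rho^q\omega^q)\cup(\rho^p\omega^p).
\]

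The rest is bookkeeping of the sign factors $(-1)^{\kappa_n}$. Multiplying the displayed identity by $(-1)^{\kappa_{p+q}}$ gives $\hat{\rho}^{p+q}(\omega^p\cup\omega^q)=(-1)^{\kappa_{p+q}}(\rho^q\omega^q)\cup(\rho^p\omega^p)$, whereas
\[
\hat{\rho}^q\omega^q\cup\hat{\rho}^p\omega^p=(-1)^{\kappa_p+\kappa_q}(\rho^q\omega^q)\cup(\rho^p\omega^p).
\]
Comparing the two expressions, the claimed factor $(-1)^{\kappa_{p+q}+\kappa_p+\kappa_q}$ is exactly what is needed to go from the second line to the first, completing the argument.

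The only nontrivial point I anticipate is the commutativity issue when swapping the two factors of the cup product: strictly, one must either assume the target ring $R$ is commutative, or restrict to the setting in which the paper actually applies the lemma, where $M=\Mon(\mb{I}_1,\ldots,\mb{I}_k)$ and the operation is additive in $\mb{Z}^k$ and hence commutative. All other steps are mechanical verifications of the reversal/cup-product definitions, so no essential obstacle arises beyond this.
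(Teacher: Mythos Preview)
Your proposal is correct and follows essentially the same approach as the paper: the paper carries out the identical computation in a single chain of equalities, whereas you first isolate the clean unhatted identity $\rho^{p+q}(\omega^p\cup\omega^q)=(\rho^q\omega^q)\cup(\rho^p\omega^p)$ and then do the sign bookkeeping, but the substance is the same. Your explicit remark about needing commutativity of the ring multiplication to swap the two factors is a point the paper uses silently in its final step; note, however, that the cup product uses the \emph{multiplication} in $R$ (multiplying the exponents, per the Remark after the cup product definition), not coordinatewise addition in $\mb{Z}^k$ as your parenthetical says---this does not affect the argument since $\mb{Z}$ is commutative either way.
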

\begin{proof}
\begin{align*}
\hat{\rho}^{p+q}\omega^{p}\cup\omega^q(r_1 \ibr \cdots \ibr r_{p+q})&=
(-1)^{\kappa_{p+q}}\omega^{p+q}(r_{p+q} \ibr \cdots \ibr r_1)
\\&=(-1)^{\kappa_{p+q}}\omega^p(r_{p+q} \ibr \cdots \ibr r_{q+1})\omega^q(r_q \ibr \cdots \ibr r_1)
\\&=(-1)^{\kappa_{p+q}+\kappa_{p}+\kappa_{q}}\hat{\rho}^p\omega^p(r_{q+1} \ibr \cdots \ibr r_{p+q})\hat{\rho}^q\omega^q(r_1 \ibr \cdots \ibr r_q)
\\&=(-1)^{\kappa_{p+q}+\kappa_{p}+\kappa_{q}}\hat{\rho}^q\omega^q\cup\hat{\rho}^p\omega^p(r_1 \ibr \cdots \ibr r_{p+q})
\end{align*}
\end{proof}
It follows then 
\begin{Lemma}
\(\hat{\rho}^{p+q}\omega^{p}\cup\omega^q=(-1)^{pq+1}\hat{\rho}^q\omega^q\cup\hat{\rho}^p\omega^p\).
\end{Lemma}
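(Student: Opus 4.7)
The plan is to reduce the statement to the previous lemma and then perform a parity calculation. The previous lemma already gives
\[
\hat{\rho}^{p+q}\omega^{p}\cup\omega^q=(-1)^{\kappa_{p+q}+\kappa_{p}+\kappa_{q}}\hat{\rho}^q\omega^q\cup\hat{\rho}^p\omega^p,
\]
so the content of this second lemma is purely the congruence
\[
\kappa_{p+q}+\kappa_{p}+\kappa_{q}\equiv pq+1\pmod 2,
\]
where by definition $\kappa_n=\binom{n+1}{2}+1=\frac{n(n+1)}{2}+1$.

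First I would substitute the closed form of $\kappa_n$ and clear the factor of $2$ in the denominators. Combining the three binomial coefficients gives
\[
\tfrac{(p+q)(p+q+1)}{2}+\tfrac{p(p+1)}{2}+\tfrac{q(q+1)}{2}+3 = p^{2}+q^{2}+pq+p+q+3,
\]
which is a plain integer, so one can just reduce it modulo $2$. Using $p^{2}\equiv p$ and $q^{2}\equiv q\pmod 2$, the sum collapses to $2p+2q+pq+1\equiv pq+1\pmod 2$, which is exactly what is needed.

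I would then conclude by chaining the two equalities: the previous lemma provides the sign $(-1)^{\kappa_{p+q}+\kappa_{p}+\kappa_{q}}$, and the congruence just established rewrites this sign as $(-1)^{pq+1}$. There is essentially no obstacle here: the whole proof is a one-line invocation of the previous lemma followed by an elementary parity count. The only place where a slip is likely is in handling the constant $+3$ (or equivalently the three $+1$'s coming from the three $\kappa$'s), but since $3\equiv 1\pmod 2$ it cleanly combines with the $2(p+q)$ terms to yield the desired $+1$ in the exponent.
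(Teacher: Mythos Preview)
Your proof is correct and follows essentially the same route as the paper: both reduce the statement to the congruence $\kappa_{p+q}+\kappa_p+\kappa_q\equiv pq+1\pmod 2$. In fact your version is slightly cleaner, since the paper repeats the computation from the preceding lemma verbatim before asserting the congruence without justification, whereas you cite that lemma directly and actually carry out the parity check.
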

\begin{proof}
The proof follows by noticing that  \({\kappa_{p+q}+\kappa_{p}+\kappa_{q}}\equiv {pq+1}\mod{2}\).
\end{proof}
\section{Symmetries of the root system}\label{sec:equivariance}
In this section we show that the (co)homology is equivariant with respect to automorphisms of the root system. 

Let $\sigma\in\Aut(\roots)$.
Then \(\sigma(\beta\rootp\gamma)=\sigma(\beta)\rootp\sigma(\gamma)\).
We define 
\(\sigma \lbr r_1\br \cdots\br r_n\rbr=\lbr\sigma r_1\br \cdots\br \sigma r_n\rbr\), and by linear extension this defines an action on the chains $C_{\text{\textbullet}}(\roots)$.
\begin{Lemma}
The action of $\Aut(\roots)$ on $C_{\text{\textbullet}}(\roots)$ commutes with the differential \(\mathsf{\partial}\).
\end{Lemma}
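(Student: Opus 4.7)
The plan is to reduce this to a direct computation verifying that each term in the defining formula for $\mathsf{\partial}^n$ is preserved by $\sigma$. Before that, I would first need to check a well-definedness point: that $\sigma$ really does act on $C_{\text{\textbullet}}(\roots)$, i.e.\ that $\sigma$ sends $T_n(\roots)$ into $T_n(\roots)$. This follows by induction on $n$: if $\lbr r_1 \br \cdots \br r_n \rbr \in T_n(\roots)$ then by hypothesis every contraction $\lbr r_1 \br \cdots \br r_{i} \rootp r_{i+1} \br \cdots \br r_n \rbr$ lies in $T_{n-1}(\roots)$, so by the compatibility $\sigma(r_i \rootp r_{i+1}) = \sigma(r_i) \rootp \sigma(r_{i+1})$ the analogous contractions of $\lbr \sigma r_1 \br \cdots \br \sigma r_n \rbr$ are images under $\sigma$ of elements of $T_{n-1}(\roots)$, which by the inductive hypothesis are in $T_{n-1}(\roots)$. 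One also notes $\sigma(0) = 0$ since an automorphism of $\roots$ is in particular a linear map, so the zero chain $0_n$ is mapped to itself.

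Next I would verify commutation on basis elements; the result on all of $C_{n+1}(\roots)$ follows from $\mb{Z}$-linear extension. Applying $\sigma$ to the defining formula
\[
\mathsf{\partial}^n \lbr r_0\br \cdots\br r_n\rbr = \lbr r_1\br \cdots\br r_n\rbr + \sum_{j=1}^n (-1)^{j}\lbr r_0\br\cdots\br r_{j-1}\rootp r_j\br\cdots\br r_n\rbr -(-1)^n \lbr r_0\br \cdots\br r_{n-1}\rbr
\]
and then using $\sigma(r_{j-1} \rootp r_j) = \sigma r_{j-1} \rootp \sigma r_j$ inside each summand turns the right-hand side into exactly $\mathsf{\partial}^n \lbr \sigma r_0 \br \cdots \br \sigma r_n \rbr = \mathsf{\partial}^n \sigma \lbr r_0 \br \cdots \br r_n \rbr$. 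This is the content of the lemma.

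I do not expect a genuine obstacle here: the statement is formal and rests entirely on the single identity $\sigma(\beta \rootp \gamma) = \sigma \beta \rootp \sigma \gamma$ together with $\sigma(0) = 0$. The only mildly delicate point worth spelling out explicitly in the write-up is the well-definedness check above, which ensures that the chain $\sigma \lbr r_0 \br \cdots \br r_n \rbr$ is an element of $T_{n+1}(\roots)$ in the first place and that the terms appearing after applying $\mathsf{\partial}^n$ are legitimate members of $T_n(\roots)$, so both sides of the claimed equality live in the same space.
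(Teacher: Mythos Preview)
Your proposal is correct and follows the same approach as the paper, which simply states that ``by a straightforward computation \(\sigma \mathsf{\partial}^n \lbr r_0\br \cdots\br r_n\rbr =\mathsf{\partial}^n \sigma  \lbr r_0\br \cdots\br r_n\rbr\).'' You have filled in more detail than the paper provides, in particular the well-definedness check that $\sigma$ preserves $T_n(\roots)$, which the paper leaves implicit.
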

\begin{proof}
By a straightforward computation
\(\sigma \mathsf{\partial}^n \lbr r_0\br \cdots\br r_n\rbr =\mathsf{\partial}^n \sigma  \lbr r_0\br \cdots\br r_n\rbr\).
\end{proof}
\begin{Lemma}
The action of $\Aut(\roots)$ on the cochains $C^{\text{\textbullet}}(\roots,M)$ commutes with the differential \(\mathsf{d}\).
\end{Lemma}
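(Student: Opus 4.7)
The plan is to deduce this cochain-level statement directly from the previous lemma on chains by the usual pullback/adjointness argument. Recall that the coboundary is defined via $\mathsf{d}^n \omega^n(\phi) = \omega^n(\partial^n \phi)$ for $\phi \in T_{n+1}(\roots)$, and that the $\Aut(\roots)$-action on cochains is the natural pullback, $(\sigma \omega^n)(\phi) := \omega^n(\sigma \phi)$ (extended $\mathbb{Z}$-linearly). So the statement we need is really the assertion that forming pullbacks respects the duality between $\partial$ and $\mathsf{d}$.

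Concretely, I would carry out the verification on a generating chain $\lbr r_0 \br \cdots \br r_n \rbr \in T_{n+1}(\roots)$ and then extend by $\mathbb{Z}$-linearity to all of $C_{n+1}(\roots)$. For a fixed $\sigma \in \Aut(\roots)$ and $\omega^n \in C^n(\roots,M)$, compute
\begin{align*}
(\sigma\,\mathsf{d}^n \omega^n)\lbr r_0 \br \cdots \br r_n \rbr
&= (\mathsf{d}^n \omega^n)\bigl(\sigma \lbr r_0 \br \cdots \br r_n \rbr\bigr)
= \omega^n\bigl(\partial^n \sigma \lbr r_0 \br \cdots \br r_n \rbr\bigr) \\
&= \omega^n\bigl(\sigma \,\partial^n \lbr r_0 \br \cdots \br r_n \rbr\bigr)
= (\sigma \omega^n)\bigl(\partial^n \lbr r_0 \br \cdots \br r_n \rbr\bigr) \\
&= \mathsf{d}^n(\sigma \omega^n)\lbr r_0 \br \cdots \br r_n \rbr,
\end{align*}
where the first and fourth equalities use the definition of the $\Aut(\roots)$-action on cochains, the second and fifth use the definition of $\mathsf{d}^n$, and the crucial third equality invokes the previous lemma that $\sigma \partial^n = \partial^n \sigma$ on $C_{n+1}(\roots)$.

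There is essentially no obstacle: the entire content of the statement is packaged in the lemma about $\partial$, and the step from chains to cochains is purely formal. The only minor things worth making explicit are that the pullback action really is $\mathbb{Z}$-linear on $C^n(\roots,M)$, and that it is legitimate to test the identity on the generators $T_{n+1}(\roots)$ since both $\mathsf{d}^n$ and the action of $\sigma$ are $\mathbb{Z}$-linear. One might also remark, as a corollary, that $\Aut(\roots)$ therefore acts on the cohomology groups $H^n(\roots,M)$, and that this action commutes with the reversal involution $\hat\rho^n$ and with the cup product, so that all the structure developed in the preceding sections is $\Aut(\roots)$-equivariant.
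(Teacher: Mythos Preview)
Your proof is correct and follows exactly the paper's approach: reduce to the commutation of $\sigma$ with $\partial^n$ via the definition of $\mathsf{d}^n$ as the pullback of $\partial^n$. The only cosmetic difference is that the paper uses the convention $(\sigma\omega^n)(\phi)=\omega^n(\sigma^{-1}\phi)$ (so that one obtains a left action) rather than your $(\sigma\omega^n)(\phi)=\omega^n(\sigma\phi)$; the chain of equalities is otherwise identical.
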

\begin{proof} Again a straightforward computation
\begin{align*}
\sigma \mathsf{d}^n \omega^n(r_0 \ibr \cdots \ibr r_n)&=\mathsf{d}^n \omega^n(\sigma^{-1}(r_0 \ibr \cdots \ibr r_n))\\
&=\omega^n( \mathsf{\partial}^n \sigma^{-1}(r_0 \ibr \cdots \ibr r_n))\\
&=\omega^n(\sigma^{-1} \mathsf{\partial}^n(r_0 \ibr \cdots \ibr r_n))\\
&=\sigma\omega^n(\mathsf{\partial}^n(r_0 \ibr \cdots \ibr r_n))\\
&=\mathsf{d}^n \sigma\omega^n(r_0 \ibr \cdots \ibr r_n)
\end{align*}
proves the statement.
\end{proof}


\section{The second cohomology group $H^2_+(\roots,M)$}\label{sec:acyclicity}
\subsection{Acyclicity}
In this section we show that all symmetric $2$-cocycles are integrable.
\begin{Lemma}\label{lem:properties of roots} Consider a reduced root system $\roots$ with positive roots $\roots^+$ and base $\sroots$. 
Addition in the ambient Euclidean space is denoted by $\rootp$ and $(\cdot\,,\cdot)$ is the invariant inner product.
\begin{enumerate}
\item \label{item:obtuse angle adds to root}
If the angle between two nonproportional roots is strictly obtuse, then their sum is a root.
\item \label{item:simple roots obtuse}
If $\alpha,\alpha'\in\sroots$, $\alpha\ne\alpha'$, then $\inp{\alpha}{\alpha'}\le 0$ and $\alpha-\alpha' \notin \roots$.
\item \label{item:positive root has sharp angle with simple root}
If $\beta\in\roots^+\setminus\sroots$, then there exists $\alpha\in\sroots$ such that $\inp{\beta}{\alpha}>0$. In particular $\beta\rootm\alpha\in\roots$.
\item \label{item:minus two simple roots}
Let $\alpha_i,\;\alpha_j$ be distinct simple roots. If $\beta$ is a root, and $\gamma_i=\beta\rootm\alpha_i$ and $\gamma_j=\beta\rootm\alpha_j$ are roots or zero, then $\delta=\beta\rootm\alpha_i\rootm\alpha_j$ is a root.
\item \label{item:subtract simple root from sum and summand}
If two roots $\beta$ and $\beta'$ add up to a positive root that is not simple, then there exists a simple root $\alpha$ such that $\beta\rootp\beta'\rootm\alpha$ is a root and either $\beta\rootm\alpha$ or $\beta'\rootm\alpha$ is a root as well.
\end{enumerate}
\end{Lemma}
\begin{proof}
Statements \ref{item:obtuse angle adds to root}, \ref{item:simple roots obtuse} and \ref{item:positive root has sharp angle with simple root} are well known and can be found in standard texts covering root systems, e.g.~\cite{MR0323842, MR1890629}. 
Statement \ref{item:minus two simple roots} can be proved as follows. Since $\beta\ne 0$ we have
\begin{align*}0&<\inp{\beta}{\beta}=\inp{\alpha_i\rootp\gamma_i}{\alpha_j\rootp\gamma_j}=\inp{\alpha_i}{\alpha_j}+\inp{\alpha_i}{\gamma_j}+\inp{\gamma_i}{\alpha_j}+\inp{\gamma_i}{\gamma_j}.
\end{align*}
From statements \ref{item:obtuse angle adds to root} and \ref{item:simple roots obtuse} we know that $\inp{\alpha_i}{\alpha_j}\le 0$ and $\inp{\gamma_i}{\gamma_j}\le 0$ since $\gamma_i\rootm\gamma_j=\alpha_j\rootm\alpha_i$ is not a root. 
It follows that
\[\inp{\alpha_i}{\gamma_j}+\inp{\gamma_i}{\alpha_j}>0.\] Therefore at least one of these terms is positive and at least one of $\gamma_i\rootm\alpha_j=\delta$ and $\gamma_j\rootm\alpha_i=\delta$ is a root (by  statement \ref{item:obtuse angle adds to root}) as desired.

Finally, to prove statement \ref{item:subtract simple root from sum and summand} one observes that 
by statement \ref{item:positive root has sharp angle with simple root} there is a simple root $\alpha$ such that $0<\inp{\beta\rootp\beta'}{\alpha}=\inp{\beta}{\alpha}+\inp{\beta'}{\alpha}$. Hence at least one of the inner products on the right hand side is positive and therefore the involved roots can be subtracted in $\roots$, by statement \ref{item:obtuse angle adds to root}.
\end{proof}
\begin{Lemma}
If \(\omega^2\in Z^2(\roots,M)\) then 
\begin{align}
\label{eq:symmetry of killing form}
\omega^2(\alpha \ibr \rootm\alpha)&=\omega^2(\rootm\alpha \ibr \alpha),\\
\label{eq:identity 1}
\omega^2(\alpha \ibr \beta)&=
\omega^2(\alpha \ibr \rootm\alpha)-\omega^2(\rootm\alpha \ibr \alpha\rootp\beta)\\
&=\omega^2(\beta \ibr \rootm\beta)-\omega^2(\alpha\rootp\beta \ibr \rootm\beta),
\label{eq:identity 2}
\end{align}
for all $\lbr\alpha\br\beta\rbr\in T_2(\roots)$.
\end{Lemma}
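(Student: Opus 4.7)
The three identities all follow from evaluating the cocycle condition $\mathsf{d}^2\omega^2 = 0$ on three carefully chosen 3-chains, exploiting the convention that $\omega^n$ vanishes on any chain containing a zero entry (since such chains are identified with the zero chain $0_n$, and $\omega^n$ is $\mathbb{Z}$-linear).

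For the symmetry relation \eqref{eq:symmetry of killing form}, the plan is to apply $\mathsf{d}^2\omega^2 = 0$ to the 3-chain $\lbr \rootm\alpha\br \alpha \br \rootm\alpha\rbr$. This chain lies in $T_3(\roots)$: collapsing the first two entries yields $\lbr 0\br \rootm\alpha\rbr = 0_2$, collapsing the last two yields $\lbr \rootm\alpha\br 0\rbr = 0_2$, and the total sum $\rootm\alpha$ is a root. Expanding
\[
\mathsf{d}^2\omega^2(\rootm\alpha \ibr \alpha \ibr \rootm\alpha) = \omega^2(\alpha \ibr \rootm\alpha) - \omega^2(0 \ibr \rootm\alpha) + \omega^2(\rootm\alpha \ibr 0) - \omega^2(\rootm\alpha \ibr \alpha) = 0
\]
and noting that the two middle terms vanish yields \eqref{eq:symmetry of killing form} directly.

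For identity \eqref{eq:identity 1}, the plan is to evaluate $\mathsf{d}^2\omega^2$ on $\lbr \rootm\alpha\br \alpha \br \beta\rbr$. This chain lies in $T_3(\roots)$ precisely because $\lbr\alpha\br\beta\rbr\in T_2(\roots)$ by hypothesis (so $\alpha\rootp\beta$ exists) and the other collapses yield $\lbr 0 \br \beta\rbr$ and $\lbr \rootm\alpha\br \alpha\rootp\beta\rbr$, both valid. The cocycle condition reads
\[
\omega^2(\alpha \ibr \beta) - \omega^2(0 \ibr \beta) + \omega^2(\rootm\alpha \ibr \alpha\rootp\beta) - \omega^2(\rootm\alpha \ibr \alpha) = 0,
\]
and after discarding the degenerate middle term and using \eqref{eq:symmetry of killing form} to rewrite $\omega^2(\rootm\alpha \ibr \alpha)$ as $\omega^2(\alpha \ibr \rootm\alpha)$, identity \eqref{eq:identity 1} follows.

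The third identity \eqref{eq:identity 2} is obtained symmetrically by applying $\mathsf{d}^2\omega^2 = 0$ to the 3-chain $\lbr \alpha\br \beta \br \rootm\beta\rbr$, which lies in $T_3(\roots)$ because $\lbr\alpha\br\beta\rbr\in T_2(\roots)$ ensures $\alpha\rootp\beta$ exists, and $\lbr\alpha\rootp\beta\br\rootm\beta\rbr$ collapses to $\alpha\in\roots$. The expansion yields
\[
\omega^2(\beta \ibr \rootm\beta) - \omega^2(\alpha\rootp\beta \ibr \rootm\beta) + \omega^2(\alpha \ibr 0) - \omega^2(\alpha \ibr \beta) = 0,
\]
and after discarding the degenerate term one reads off \eqref{eq:identity 2}. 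There is no real obstacle here: the only subtlety is checking that the three test 3-chains all lie in $T_3(\roots)$, which reduces to verifying a small number of sum-existence conditions guaranteed by the hypothesis $\lbr\alpha\br\beta\rbr\in T_2(\roots)$ together with the fact that $0\in\roots_0$ is always allowed as an intermediate sum.
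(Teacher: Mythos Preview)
Your proof is correct and follows essentially the same approach as the paper: both derive the three identities by evaluating $\mathsf{d}^2\omega^2=0$ on three test $3$-chains containing degenerate partial sums. The only cosmetic difference is your choice of chains for the first two identities---you use $\lbr\rootm\alpha\br\alpha\br\rootm\alpha\rbr$ and $\lbr\rootm\alpha\br\alpha\br\beta\rbr$ where the paper uses $\lbr\alpha\br\rootm\alpha\br\alpha\rbr$ and $\lbr\alpha\br\rootm\alpha\br\alpha\rootp\beta\rbr$---which forces you to invoke \eqref{eq:symmetry of killing form} once more in deriving \eqref{eq:identity 1}, whereas the paper's chain yields it directly.
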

\begin{proof}
If $\lbr\alpha\br\beta\rbr\in T_2(\roots)$ then 
\[
\lbr\alpha\br\rootm\alpha\br\alpha\rbr,\quad
\lbr\alpha\br\rootm\alpha\br\alpha\rootp\beta\rbr,\quad
\lbr\alpha\br\beta\br\rootm\beta\rbr,
\]
are in $T_3(\roots)$.
Equations (\ref{eq:symmetry of killing form})-(\ref{eq:identity 2}) are rearrangements of $\mathsf{d}^2\omega^2$ evaluated in these $3$-links respectively:
\begin{align*}
0&=\mathsf{d}^2\omega^2(\alpha \ibr \rootm\alpha \ibr \alpha)\\
&=\omega^2(\rootm\alpha \ibr \alpha)
-\omega^2(0 \ibr \alpha)
+\omega^2(\alpha \ibr 0)-\omega^2(\alpha \ibr \rootm\alpha)\\
&=\omega^2(\rootm\alpha \ibr \alpha)
-\omega^2(\alpha \ibr \rootm\alpha),\\
0&=\mathsf{d}^2\omega^2(\alpha \ibr \rootm\alpha \ibr \alpha\rootp\beta)\\
&=\omega^2(\rootm\alpha \ibr \alpha\rootp\beta)-\omega^2(0 \ibr \alpha\rootp\beta)
+\omega^2(\alpha \ibr \beta)-\omega^2(\alpha \ibr \rootm\alpha)\\
&=\omega^2(\rootm\alpha \ibr \alpha\rootp\beta)+\omega^2(\alpha \ibr \beta)-\omega^2(\alpha \ibr \rootm\alpha),\\
0&=\mathsf{d}^2\omega^2(\alpha \ibr  \beta \ibr  \rootm\beta)\\
&=\omega^2(\beta \ibr \rootm\beta)-\omega^2(\alpha\rootp\beta \ibr \rootm\beta)
+\omega^2(\alpha \ibr  0)-\omega^2(\alpha \ibr  \beta)\\
&=\omega^2(\beta \ibr \rootm\beta)-\omega^2(\alpha\rootp\beta \ibr \rootm\beta)
-\omega^2(\alpha \ibr \beta).
\end{align*}
\end{proof}
Notice that equation (\ref{eq:symmetry of killing form}) implies that an antisymmetric $2$-cocycle vanishes on pairs of opposite roots. In terms of the Lie algebra model of Corollary \ref{cor:model} equation (\ref{eq:symmetry of killing form}) is the symmetry of the Killing form.
\begin{Lemma}
\label{lem:cocycle zero on positive roots}
If \(\omega^2\in Z^2(\roots,M)\) is such that 
$\omega^2(\alpha \ibr \beta)=0$ if $\alpha, \beta\in\roots^+$ or $\beta=\rootm\alpha$, then $\omega^2=0$. 
\end{Lemma}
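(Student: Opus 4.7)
The plan is to exhaust the possibilities for the sign pattern of $(\alpha, \beta, \alpha\rootp\beta)$, where each entry is classified as positive (in $\roots^+$), negative (in $-\roots^+$), or zero. The cases $\alpha=0$ or $\beta=0$ give the trivial chain $0_2$, and $\alpha\rootp\beta=0$ means $\beta=-\alpha$, which is handled by hypothesis. So I may assume all three of $\alpha, \beta, \alpha\rootp\beta$ lie in $\roots^+ \cup (-\roots^+)$. A quick height count rules out the patterns $(+,+,-)$ and $(-,-,+)$ since the sum of two positive roots has positive height, leaving six sign triples.

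Under the present hypothesis $\omega^2(\alpha \ibr -\alpha)=0$, identities (\ref{eq:identity 1}) and (\ref{eq:identity 2}) simplify to
\begin{equation*}
\omega^2(\alpha \ibr \beta) = -\omega^2(-\alpha \ibr \alpha\rootp\beta), \qquad \omega^2(\alpha \ibr \beta) = -\omega^2(\alpha\rootp\beta \ibr -\beta),
\end{equation*}
valid whenever the right-hand 2-chains lie in $T_2(\roots)$. This is automatic in the situations to be used, because the new sums on the right ($\beta$ and $\alpha$ respectively) are roots already present in the triple $\{\alpha, \beta, -(\alpha\rootp\beta)\}$, and the individual arguments are roots from that triple or its negation.

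The case analysis now reduces each pattern to a positive-positive pair, on which $\omega^2$ vanishes by hypothesis. The pattern $(+,+,+)$ is done. For $(+,-,+)$ the second identity produces a pair with both arguments positive; symmetrically the first identity handles $(-,+,+)$. Pattern $(+,-,-)$ reduces via the second identity to a pair with signs $(-,+)$ and sum $\alpha \in \roots^+$, i.e.\ pattern $(-,+,+)$, already handled; similarly $(-,+,-)$ reduces via the first identity to $(+,-,+)$. Finally, $(-,-,-)$ becomes, via the first identity, a pair with signs $(+,-)$ and sum $\beta \in \roots^-$, i.e.\ pattern $(+,-,-)$, already handled by the previous step. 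At most two cascaded applications of the identities are required in every case.

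The only bookkeeping is tracking the sign of $\omega^2$ through the chain of identities and verifying that each intermediate triple lies in $T_3(\roots)$; both are routine from the observation above. I do not foresee a substantive obstacle, since the argument is a finite combinatorial case check driven entirely by the two simplified identities.
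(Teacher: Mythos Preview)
Your proposal is correct and follows essentially the same approach as the paper's own proof: both reduce every sign pattern to the positive--positive case by cascading the identities (\ref{eq:identity 1}) and (\ref{eq:identity 2}), simplified via the hypothesis $\omega^2(\alpha\ibr\rootm\alpha)=0$; the paper organises the reduction in two phases (first dispose of all pairs with $\alpha\rootp\beta\in\roots^+_0$, then the rest) while you enumerate the six admissible sign triples directly. One harmless slip: unwinding the case $(-,-,-)$ through $(+,-,-)$ and then $(-,+,+)$ actually costs three applications of the identities, not two.
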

\begin{proof} 
We first show that $\omega^2(\alpha \ibr \beta)=0$ if $\alpha\rootp\beta\in\roots^+_0$. This holds by assumption when $\alpha,\beta\in\roots^+$ or $\alpha\rootp\beta=0$. If on the other hand $\alpha\in\roots^-$ that is, if \(\alpha\) is a negative root, then $\beta\in\roots^+$ and equation (\ref{eq:identity 1}) shows 
$\omega^2(\alpha \ibr \beta)=\omega^2(\alpha \ibr \rootm\alpha)-\omega^2(\rootm\alpha \ibr \alpha\rootp\beta)=0$ since the components of the argument 
$\lbr\alpha \ibr \rootm\alpha\rbr$ are opposite and the components of the argument $\lbr\rootm\alpha \ibr \alpha\rootp\beta\rbr$ are both positive.
Now suppose $\alpha\in\roots^+$ and $\beta\in\roots^-$. Then equation (\ref{eq:identity 2}) shows $\omega^2(\alpha \ibr \beta)=0$, and we conclude 
$\omega^2(\alpha \ibr \beta)=0$ if $\alpha\rootp\beta\in\roots^+_0$. \\
Using this information we see from equation (\ref{eq:identity 1}) that $\omega^2(\alpha \ibr \beta)=0$ if $\beta\in\roots^+$ and from (\ref{eq:identity 2}) that $\omega^2(\alpha \ibr \beta)=0$ if $\alpha\in\roots^+$. That leaves only one case to check: $\alpha, \beta\in\roots^-$, which again follows readily from either of these equations.
\end{proof}

\begin{Theorem}
\label{thm:H^2_+=0}
Let $\omega^2_+\in Z^2_+(\roots,M)$. Define $\omega^1\in C^1(\roots,M)$ inductively on the height of the roots as follows.
\begin{enumerate}
\item The values on simple roots, $\omega^1(\alpha),\;\alpha\in\sroots$, are chosen as free variables, in $M$.
\item If $\beta\in\roots^+$ there exists a simple root $\alpha$ such that $\beta\rootm\alpha\in\roots_0$ and we define \[\omega^1(\beta)=\omega^1(\beta\rootm\alpha)-\omega^2_+(\alpha \ibr \beta\rootm\alpha)+\omega^1(\alpha).\]
\item If $\beta\in\roots^-$ set $\omega^1(\beta)=\omega^2_+(\beta \ibr \rootm\beta)-\omega^1(\rootm\beta)$.
\end{enumerate}
Then $\omega^1$ is well defined and $\mathsf{d}^1\omega^1=\omega^2_+$. In particular, $H^2_+(\roots,M)$ is trivial.
\end{Theorem}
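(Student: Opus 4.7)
The plan is to verify the theorem in two stages: first, that the inductive prescription for $\omega^1$ is consistent (independent of the choice of simple root in step~2), and second, that the resulting $1$-form satisfies $\mathsf{d}^1\omega^1=\omega^2_+$. Both stages proceed by induction on the height of roots.

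For well-definedness at $\beta\in\roots^+\setminus\sroots$, I would suppose that two distinct simple roots $\alpha,\alpha'\in\sroots$ are both admissible, i.e.\ $\beta\rootm\alpha$ and $\beta\rootm\alpha'$ lie in $\roots^+\cup\{0\}$. By Lemma~\ref{lem:properties of roots}, item~\ref{item:minus two simple roots}, $\delta:=\beta\rootm\alpha\rootm\alpha'$ is a root or zero. If $\delta=0$ then $\beta=\alpha\rootp\alpha'$ and the two prescriptions coincide directly by the symmetry of $\omega^2_+$. If $\delta\in\roots^+$, the induction hypothesis lets me compute $\omega^1(\beta\rootm\alpha)$ by subtracting $\alpha'$ and $\omega^1(\beta\rootm\alpha')$ by subtracting $\alpha$; substituting into the two candidate values of $\omega^1(\beta)$, the required equality reduces to $\mathsf{d}^2\omega^2_+(\alpha\ibr\delta\ibr\alpha')=0$ after applying symmetry. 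The existence of at least one admissible simple root is granted by Lemma~\ref{lem:properties of roots}, item~\ref{item:positive root has sharp angle with simple root}.

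To prove $\mathsf{d}^1\omega^1=\omega^2_+$, I note that $\mathsf{d}^1\omega^1-\omega^2_+$ is a $2$-cocycle, so by Lemma~\ref{lem:cocycle zero on positive roots} it suffices to check vanishing when $\beta=\rootm\alpha$ or when $\alpha,\beta\in\roots^+$. The opposite-root case is immediate from step~3, using $\omega^1(0)=0$ and the symmetry of $\omega^2_+$. For positive $\alpha,\beta$ I would induct on the height of $\alpha\rootp\beta$: if one of them is simple, the identity is literally the defining equation in step~2 (invoking well-definedness so that the admissible simple root may be taken to be the chosen one); otherwise, Lemma~\ref{lem:properties of roots}, item~\ref{item:substract simple root from sum and summand}, supplies a simple root $\alpha_0$ with $\alpha\rootp\beta\rootm\alpha_0\in\roots$ and, without loss of generality, $\beta':=\beta\rootm\alpha_0\in\roots^+$. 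Expanding both $\omega^1(\beta)$ and $\omega^1(\alpha\rootp\beta)$ via step~2 with this $\alpha_0$ and invoking the induction hypothesis at $\lbr\alpha\br\beta'\rbr$, the identity again collapses to $\mathsf{d}^2\omega^2_+(\alpha\ibr\beta'\ibr\alpha_0)=0$ modulo the symmetry of $\omega^2_+$.

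The main obstacle is not any single technical step but the organisation of the induction so that each reduction strictly decreases height and lands on exactly the right $3$-chain. The geometric content of Lemma~\ref{lem:properties of roots}, items~\ref{item:minus two simple roots} and~\ref{item:substract simple root from sum and summand}, is tailored precisely for this; beyond them, only the symmetry of $\omega^2_+$ together with the $2$-cocycle equation is required, and the fact that the same cocycle identity $\mathsf{d}^2\omega^2_+=0$ drives both stages is what makes the construction genuinely constructive.
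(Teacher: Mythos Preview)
Your proposal is correct and follows essentially the same route as the paper: well-definedness via Lemma~\ref{lem:properties of roots}, item~\ref{item:minus two simple roots}, reducing to $\mathsf{d}^2\omega^2_+(\alpha\ibr\delta\ibr\alpha')=0$; then $\mathsf{d}^1\omega^1=\omega^2_+$ by height induction on positive pairs using item~\ref{item:substract simple root from sum and summand}, collapsing to a single cocycle identity, and finishing with Lemma~\ref{lem:cocycle zero on positive roots}. The only cosmetic differences are that the paper subtracts the simple root from the first argument (writing $\bar{\alpha}=\alpha\rootm\alpha_i$ and landing on the $3$-chain $\lbr\alpha_i\br\bar{\alpha}\br\beta\rbr$) rather than from the second, and starts the induction at a pair of simple roots rather than at ``one argument simple''; neither changes the substance.
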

\begin{proof}
It is possible that there are multiple ways to write a positive root $\beta$ of height $h+1$ as a sum of roots of height $h$ and $1$. Therefore one needs to show that $\omega^1$ given in the theorem is well defined. Let $\beta=\gamma_i\rootp\alpha_i=\gamma_j\rootp\alpha_j$ be two such decompositions. Then $\delta=\gamma_i\rootm\alpha_j=\gamma_j\rootm\alpha_i=\beta\rootm\alpha_i\rootm\alpha_j$ is a root as well, by Lemma \ref{lem:properties of roots}, statement \ref{item:minus two simple roots}. 

Suppose $\omega^1$ is well defined on roots up to height $h$. Let $\omega^1_i(\beta)=\omega^1(\beta\rootm\alpha_i)-\omega^2_+(\alpha_i \ibr \beta\rootm\alpha_i)+\omega^1(\alpha_i)$. Then
\begin{align*}
\omega^1_i(\beta)-\omega^1_j(\beta)
&=\omega^1(\gamma_i)-\omega^2_+(\alpha_i \ibr \gamma_i)+\omega^1(\alpha_i)-\left[\omega^1(\gamma_j)-\omega^2_+(\alpha_j \ibr \gamma_j)+\omega^1(\alpha_j)\right]\\
&=\omega^1(\delta\rootp\alpha_j)-\omega^2_+(\alpha_i \ibr \gamma_i)+\omega^1(\alpha_i)-\left[\omega^1(\delta\rootp\alpha_i)-\omega^2_+(\alpha_j \ibr \gamma_j)+\omega^1(\alpha_j)\right]\\
&=\omega^1(\delta)-\omega^2_+(\alpha_j \ibr \delta)+\omega^1(\alpha_j)-\omega^2_+(\alpha_i \ibr \gamma_i)+\omega^1(\alpha_i)\\
&\quad-\left[\omega^1(\delta)-\omega^2_+(\alpha_i \ibr \delta)+\omega^1(\alpha_i)-\omega^2_+(\alpha_j \ibr \gamma_j)+\omega^1(\alpha_j)\right]\\
&=-\omega^2_+(\alpha_j \ibr \delta)-\omega^2_+(\alpha_i \ibr \gamma_i)+\omega^2_+(\alpha_i \ibr \delta)+\omega^2_+(\alpha_j \ibr \gamma_j)\\
&\overset{s}{=}\omega^2_+(\delta \ibr \alpha_i)-\omega^2_+(\gamma_i \ibr \alpha_i)+\omega^2_+(\alpha_j \ibr \gamma_j)-\omega^2_+(\alpha_j \ibr \delta)\\
&=\mathsf{d}^2\omega^2_+(\alpha_j \ibr \delta \ibr \alpha_i)=0.
\end{align*}
The symbol $\overset{s}{=}$ marks the place where the required symmetry of $\omega$ is used.

Now we turn to the proof of $\mathsf{d}^1\omega^1(\alpha \ibr \beta)=\omega^2_+(\alpha \ibr \beta)$, where we will use induction on $\text{height}\, \alpha +\text{height}\,\beta$. If $H=\max\{\text{height}\, \alpha +\text{height}\,\beta\;|\;\alpha,\,\beta,\,\alpha\rootp\beta\in \roots^+\}$ then for each $2\le h\le H$ there exists $\lbr\alpha\br\beta\rbr\in T_2(\roots)$ with $\alpha,\,\beta \in \roots^+$ and $\text{height}\, \alpha +\text{height}\,\beta=h$. This follows from Lemma \ref{lem:properties of roots}, statement \ref{item:subtract simple root from sum and summand}. 

First consider two simple roots, $\alpha_i$ and $\alpha_j$. 
By definition $\omega^1(\alpha_j\rootp\alpha_i)=\omega^1(\alpha_j)-\omega^2_+(\alpha_i \ibr \alpha_j)+\omega^1(\alpha_i)$, hence 
\begin{align*}
\mathsf{d}^1\omega^1(\alpha_i \ibr \alpha_j)&=\omega^1(\alpha_j)-\omega^1(\alpha_i\rootp\alpha_j)+\omega^1(\alpha_i)\\
&=\omega^1(\alpha_j)-\left[\omega^1(\alpha_j)-\omega^2_+(\alpha_i \ibr \alpha_j)+\omega^1(\alpha_i)\right]+\omega^1(\alpha_i)\\
&=\omega^2_+(\alpha_i \ibr \alpha_j).
\end{align*}
Suppose that $\mathsf{d}^1\omega^1(\tilde{\alpha} \ibr \tilde{\beta})=\omega^2_+(\tilde{\alpha} \ibr \tilde{\beta})$ if $\tilde{\alpha},\,\tilde{\beta},\,\tilde{\alpha}\rootp\tilde{\beta}\in \roots^+$ 
and $\text{height}\, \tilde{\alpha} +\text{height}\,\tilde{\beta}\le h$. 
Now consider another pair $\alpha,\,\beta,\,\alpha\rootp\beta\in \roots^+$ such that $\text{height}\, \alpha +\text{height}\,\beta= h+1$. 
By Lemma \ref{lem:properties of roots} statement \ref{item:subtract simple root from sum and summand} there is a simple root $\alpha_i$ such that $\alpha\rootp\beta\rootm\alpha_i\in\roots^+$ 
and without loss of generality we may assume $\bar{\alpha}=\alpha\rootm\alpha_i\in\roots_0^+$ 
(notice that a positive root minus a simple root is a nonnegative root, since each root is decomposed into simple roots with all positive or all negative coefficients). 
In the following calculation we use that, by definition, 
$\omega^1(\alpha_i\rootp\bar{\alpha}\rootp\beta)=\omega^1(\bar{\alpha}\rootp\beta)-\omega^2_+(\alpha_i \ibr \bar{\alpha}\rootp\beta)+\omega^1(\alpha_i)$ and $\omega^1(\alpha_i\rootp\bar{\alpha})=\omega^1(\bar{\alpha})-\omega^2_+(\alpha_i \ibr \bar{\alpha})+\omega^1(\alpha_i)$, and by induction hypothesis, 
$\omega^2_+(\bar{\alpha} \ibr \beta)=\omega^1(\beta)-\omega^1(\bar{\alpha}\rootp\beta)+\omega^1(\bar{\alpha})$.

We compute
\begin{align*}
\mathsf{d}^1\omega^1(\alpha \ibr \beta)-\omega^2_+(\alpha \ibr \beta)
&=\omega^1(\beta)-\omega^1(\alpha\rootp\beta)+\omega^1(\alpha)-\omega^2_+(\alpha \ibr \beta)\\
&=\omega^1(\beta)-\omega^1(\alpha_i\rootp\bar{\alpha}\rootp\beta)+\omega^1(\alpha_i\rootp\bar{\alpha})-\omega^2_+(\alpha \ibr \beta)\\
&=\omega^1(\beta)-\left[\omega^1(\bar{\alpha}\rootp\beta)-\omega^2_+(\alpha_i \ibr \bar{\alpha}\rootp\beta)+\omega^1(\alpha_i)\right]\\
&\quad+\left[\omega^1(\bar{\alpha})-\omega^2_+(\alpha_i \ibr \bar{\alpha})+\omega^1(\alpha_i)\right]-\omega^2_+(\alpha \ibr \beta)\\
&=\left[\omega^1(\beta)-\omega^1(\bar{\alpha}\rootp\beta)+\omega^1(\bar{\alpha})\right]+\omega^2_+(\alpha_i \ibr \bar{\alpha}\rootp\beta)-\omega^2_+(\alpha_i \ibr \bar{\alpha})-\omega^2_+(\alpha \ibr \beta)\\
&=\omega^2_+(\bar{\alpha} \ibr \beta)-\omega^2_+(\alpha \ibr \beta)+\omega^2_+(\alpha_i \ibr \bar{\alpha}\rootp\beta)-\omega^2_+(\alpha_i \ibr \bar{\alpha})\\
&=\mathsf{d}^2\omega^2(\alpha_i \ibr \bar{\alpha} \ibr \beta)=0,
\end{align*}
thus $\mathsf{d}^1\omega^1-\omega^2_+$ is zero on $(\roots^+\br\roots^+)\cap T_2(\roots)$ and it also maps $\lbr\alpha \ibr \rootm\alpha\rbr$ to zero for all $\alpha\in\roots$. Lemma \ref{lem:cocycle zero on positive roots} then implies $\mathsf{d}^1\omega^1=\omega^2_+$.
\end{proof}

\def\freex{\xi_2}
\def\freey{\xi_1}
\subsection{Some illustrative examples}
\label{sec:examples}
From a computational point of view \cite{MR2718933,KLS15}, one would very much like to work over \(\Mon[\mb{I}]\) instead of \(\Mon(\mb{I})\). 
The analogue of Theorem \ref{thm:H^2_+=0}, however, does not hold,
as the following example shows, which also suggests possible alternative solutions to this problem by extension of the module. We change the notation back to multiplicative,
to stay close to the application we have in mind.
Notice that \(\Mon{[}\mb{I}{]}\) is not a \(\mb{Z}\)-module in the sense of Section \ref{sec:CohomRoot}, since there are no multiplicative inverses (where, just to be very clear on this,
multiplicative here means \(\mb{Z}\)-additive).

\begin{Example}[$H^2_+(B_2,\Mon{[}\mb{I}{]})\ne 1$]\label{ex:B2}
Consider the root system $B_2$. In contrast to $A_2$, in which all roots have the same length, in $B_2$  the roots admit two lengths, and thus are either \emph{short} or \emph{long}
(see the Figure \ref{fig:B2}).
Define $\omega^1\in C^1(B_2,\Mon{[}\mb{I}{]})$ by 
\[
\omega^1(\alpha)=\left\{
\begin{array}{ll}\mb{I}&\text{if $\alpha$ is a short root,}\\
1&\text{if $\alpha$ is a long root.}\end{array}\right.
\]
The coboundary $\mathsf{d}^1\omega^1$ takes values in $\{1,\mb{I}^2\}$. Therefore we can define a $2$-cocycle $\omega^2_+$ by \[\omega^2_+=\sqrt{\mathsf{d}^1
\omega^1}\in Z^2(B_2,\Mon{[}\mb{I}{]}).\] 
More explicitly,
\[\omega^2_+(\alpha \ibr \beta)=\left\{
\begin{array}{ll}
\mb{I}&\text{if $\alpha$ and $\beta$ are short,}\\
1&\text{otherwise.}\end{array}\right.\]
We integrate $\omega^2_+$ using the algorithm of Theorem \ref{thm:H^2_+=0}. Let $\alpha_1$ be the long simple root and $\alpha_2$ the short simple root of $B_2$. 
We take the values of $\varpi^1$ on these simple roots as free variables (using the kernel of $\mathsf{d}^1$ in full), $\varpi^1(\alpha_1)=\freey$ and $\varpi^1(\alpha_2)=\freex$. 
Then $\varpi^1(\alpha_1\rootp\alpha_2)=\varpi^1(\alpha_1)\omega^2_+(\alpha_2 \ibr \alpha_1)^{-1}\varpi^1(\alpha_2)=\freey\freex$, and $\varpi^1((\alpha_1\rootp\alpha_2)\rootp\alpha_2)=\varpi^1(\alpha_1\rootp\alpha_2)\omega^2_ +(\alpha_2 \ibr \alpha_1\rootp\alpha_2)^{-1}\varpi^1(\alpha_2)=\freey\freex^2\mb{I}^{-1}$.
\begin{figure}[H]
\caption{A symmetric $2$-cocycle which is not exact over $\Mon[\mb{I}]$. Links sent to $\mb{I}$ and $1$ are depicted in \colorp\;and \colorchain\;respectively. }
\label{fig:B2}
\begin{center}
\begin{tikzpicture}[scale=2]
\path 	
node at (1,-1) [root,draw,label=0: {$ \varpi^1(\alpha_1)=\freey $}] (m10) {$  $}
node at (0,1) [root,draw,label=90: {$ \varpi^1(\alpha_2)=\freex $}] (m01) {$  $}
node at (1,0) [root,draw,label=0: {$ \freey\freex $}] (m11) {$  $}
node at (1,1) [root,draw,label=0: {$ \mb{I}^{-1}\freey\freex^2 $}] (m12) {$  $}
node at (-1,1) [root,draw,label=180: {$ \freey^{-1} $}] (n10) {$  $}
node at (0,-1) [root,draw,label=270: {$ \mb{I}\freex^{-1} $}] (n01) {$  $}
node at (-1,0) [root,draw,label=180: {$ \mb{I}(\freey\freex)^{-1} $}] (n11) {$  $}
node at (-1,-1) [root,draw,label=180: {$ \mb{I}(\freey\freex^2)^{-1} $}] (n12) {$  $}
;
\draw[schain](m10)to node{}(n10) ;
\draw[schain](m12)to node{}(n12) ;
\draw[schain](m01)to node{}(n12) ;
\draw[schain](m11)to node{}(n12) ;
\draw[schain](n01)to node{}(m12) ;
\draw[schain](n11)to node{}(m12) ;
\draw[schain](m01)to node{}(m10) ;
\draw[schain](n11)to node{}(m10) ;
\draw[schain](n01)to node{}(n10) ;
\draw[schain](m11)to node{}(n10) ;

\draw[scochainp](m01) to node{}(m11);
\draw[scochainp](m11) to node{}(n01);
\draw[scochainp](n01) to node{}(n11);
\draw[scochainp](n11) to node{}(m01);
\draw[scochainp](m01) to node{}(n01) ;
\draw[scochainp](m11)to node{}(n11) ;
\end{tikzpicture}
\end{center}
\end{figure}
This example is of particular interest because the $2$-cocycle $\omega^2_+$ takes values in $\Mon[\mb{I}]$ but it cannot be integrated in this \(\mb{N}_0\)-module. 
Indeed, $\varpi^1$ takes values $\freey$ and $\freey^{-1}$, hence requiring nonnegative powers forces $\freey=1$. 
The other opposite long roots then have values $(\mb{I}^{-1}\freex^2)^{\pm 1}$, hence avoiding negative powers requires $\freex=\sqrt{\mb{I}}$. 
We find that there is a unique solution to $\omega^2_+=\mathsf{d}^1\varpi^1$ with values in $\Mon[\mb{\sqrt{I}}]$, namely $\varpi^1=
\sqrt{\omega^1}$.
Notice that the occurrence of \(\sqrt{\mb{I}}\) is not in conflict with Theorem \ref{thm:H^2_+=0}, since it would be no problem to integrate allowing negative powers,
but a consequence of our abuse of the freedom given by \(\ker\mathsf{d}^1\).
\end{Example}

\begin{Example}[Explicit integration of an element of $Z_+^2(B_2,\Mon{[}\mb{I},\mb{J}{]})$]\label{ex:ALiAB2}
The example is the ALiA obtained by starting with the \(5\)-dimensional irreducible representation \(\mb{Y}_8\) of the icosahedral group $\mb{Y}$, and considering equivariant rational maps taking values in $\mf{so}(\mb{Y}_8)$, with poles in the smallest orbit (cf. \cite{KLS15}). 
The group generator of order five, \(r\) with \(r^5=1\), has been diagonalised. We have computed the ALiA, computed a Cartan-Weyl basis and found
that the structure constants are integer multiples of elements in \(\Mon[\mb{I},\mb{J}]\),  
where \(\mb{I}\) and \(\mb{J}\) are the automorphic functions with poles in the smallest orbit and zeros in the other two exceptional orbits respectively.
We then find the following element of $Z_+^2(B_2,\Mon[\mb{I},\mb{J}])$:
\[
\begin{array}{ll}
\omega_+^2(\alpha_{1} \ibr  \alpha_{2})= 1,
&\omega_+^2(\rootm\alpha_{2} \ibr \rootm(\alpha_{1}\rootp \alpha_{2}))= \mb{I},
\\\omega_+^2(\alpha_{1}\rootp 2\alpha_{2} \ibr \rootm\alpha_{2})= \mb{I},
&\omega_+^2(\alpha_{2} \ibr  \rootm\alpha_{2})= \mb{I},
\\\omega_+^2(\alpha_{2} \ibr  \alpha_{1}\rootp\alpha_{2})= 1,
&\omega_+^2(\alpha_{2} \ibr  \rootm(\alpha_{1}\rootp\alpha_{2}))= \mb{I},
\\\omega_+^2(\alpha_{2} \ibr  \rootm(\alpha_{1}\rootp 2\alpha_{2}))= 1,
&\omega_+^2(\alpha_{1}\rootp\alpha_{2} \ibr \rootm\alpha_{2})= \mb{I},
\\\omega_+^2(\alpha_{1} \ibr  \rootm\alpha_{1})= \mb{J},
&\omega_+^2(\alpha_{1} \ibr  \rootm(\alpha_{1}\rootp\alpha_{2}))=\mb{J},
\\\omega_+^2(\alpha_{1}\rootp\alpha_{2} \ibr  \rootm\alpha_{1})= \mb{J},
&\omega_+^2(\alpha_{1}\rootp 2\alpha_{2} \ibr \rootm(\alpha_{1}\rootp\alpha_{2}))= \mb{IJ},
\\\omega_+^2(\alpha_{1}\rootp\alpha_{2} \ibr \rootm(\alpha_{1}\rootp\alpha_{2}))= \mb{I}\mb{J},
&\omega_+^2(\alpha_{1}\rootp\alpha_{2} \ibr \rootm(\alpha_{1}\rootp 2\alpha_{2}))= \mb{J},
\\\omega_+^2(\alpha_{1}\rootp 2\alpha_{2} \ibr \rootm(\alpha_{1}\rootp 2\alpha_{2}))= \mb{I}\mb{J},
&\omega_+^2(\rootm\alpha_{1} \ibr  \rootm\alpha_{2})= 1.
\end{array}
\]
This is integrated to 
\[
\begin{array}{ll}
\text{long}&\text{short}
\\\hline
\omega^1(\alpha_1)=1,&\omega^1(\alpha_2)=1,\\
\omega^1(\rootm\alpha_1)=\mb{J},&\omega^1(\rootm\alpha_2)=\mb{I},\\
\omega^1(\alpha_1\rootp 2\alpha_2)=1,&\omega^1(\alpha_1\rootp\alpha_2)=1,\\
\omega^1(\rootm(\alpha_1\rootp 2\alpha_2))=\mb{IJ},&\omega^1(\rootm(\alpha_1\rootp\alpha_2))=\mb{IJ}.
\end{array}
\]
The short count (summing over the Killing forms \(K_{\tilde{\mf{g}}}(\tilde{e}_{\rootp\alpha}, \tilde{e}_{\rootm\alpha})\), \(\alpha=\alpha_2, \alpha_1\rootp\alpha_2\)) 
is 
\(\mb{I}+\mb{IJ}\) (cf. \cite{KLS15}), the long count (\(\alpha=\alpha_1,\alpha_1\rootp 2\alpha_2\)) is \(\mb{J}+\mb{IJ}\). 
The total count ($3\mb{I}$s and $3\mb{J}$s) is in accordance with the predictions
given by the codimensions of the invariants subspaces under the conjugating action of the generators of the group on \(\mf{so}_5\).
\end{Example}
In the theory of ALiAs several models of the ALiA play a role: the invariant matrices, the matrices of invariants and the integrated model as the one above.
The matrices of invariants, once in Weyl-Chevalley normal form, are always natural monomial in \(\mb{I}\) and \(\mb{J}\), and they can be used in the subsequent search for integrable systems,
which was the original motivation for ALiAs (see e.g. \cite{Lombardo,lm_jpa04,lm_cmp05,Bury2010}). 
The integrated models cannot in general be used for this purpose, unless they are natural, but they play a role in 
establishing whether two given ALiAs are isomorphic or not, 
and in the choice of a normal form for isomorphic cases.

\begin{Example}[Lack of root symmetry]
All previous examples of $2$-cocycles were left unchanged by some nontrivial automorphism of the root system.
In Figure \ref{fig:asymmetric cocycle} we present a $2$-cocycle $\omega^2$ on $G_2$ with values $1$ and $\mb{I}$ (in additive notation, $\omega^2\in Z^2_+(\roots,\{0,1\})$) which has no root symmetry: the only automorphism of the root system preserving this cocycle is the identity. This proves that the closedness condition does not impose any symmetry.
\begin{figure}[H]
\caption{A symmetric $2$-cocycle without root symmetry. Links sent to $\mb{I}$ are shown in \colorp\;and links sent to $1$ are not shown. }
\label{fig:asymmetric cocycle}
\begin{center}
\begin{tikzpicture}[scale=1.7]
\path 
node at (1,0) [root,draw,label=-60: $\alpha_1 $] (m10) {$ $}
node at (-1.5,.866) [root,draw,label=90: $\alpha_2 $] (m01) {$ $}
node at (-.5,.866) [root,draw,label=90: $ $] (m11) {$ $}
node at (.5,.866) [root,draw,label=90: $ $] (m21) {$ $}
node at (1.5,.866) [root,draw,label=90: $ $] (m31) {$ $}
node at (0,2*.866) [root,draw,label=90: $ $] (m32) {$ $}
node at (-1,0) [root,draw,label=180: $ $] (n10) {$ $}
node at (1.5,-.866) [root,draw,label=270: $ $] (n01) {$ $}
node at (.5,-.866) [root,draw,label=270: $ $] (n11) {$ $}
node at (-.5,-.866) [root,draw,label=270: $ $] (n21) {$ $}
node at (-1.5,-.866) [root,draw,label=270: $ $] (n31) {$ $}
node at (0,-2*.866) [root,draw,label=270: $ $] (n32) {$ $}		
;
\draw[scochainp] (n10) to node []{$ $} (m11);
\draw[scochainp] (n10) to node []{$ $} (m21);
\draw[scochainp] (n10) to node []{$ $} (m31);
\draw[scochainp] (n10) to node []{$ $} (n01);
\draw[scochainp] (n10) to node []{$ $} (n11);
\draw[scochainp] (n10) to node []{$ $} (n21);
\draw[scochainp] (n11) to node []{$ $} (n21);
\draw[scochainp] (n11) to node []{$ $} (m32);
\draw[scochainp] (n11) to node []{$ $} (m21);
\draw[scochainp] (n21) to node []{$ $} (m31);
\draw[scochainp] (n21) to node []{$ $} (m32);
\draw[scochainp] (m11) to node []{$ $} (n01);
\draw[scochainp] (n01) to [bend left=30](n31);
\draw[scochainp] (n01) to [bend right=30](m32);
\draw[scochainp] (n31) to [bend left=30](m32);
\draw[scochainp] (m10) to (n10);
\draw[scochainp] (m01) to (n01);
\draw[scochainp] (m11) to (n11);
\draw[scochainp] (m21) to (n21);
\draw[scochainp] (m31) to (n31);
\draw[scochainp] (m32) to (n32);
\end{tikzpicture}
\end{center}
\end{figure}
\newcommand{\res}{\textrm{res}\,}
To see that Figure \ref{fig:asymmetric cocycle} indeed depicts a cocycle, one can consider the following construction. Define an additive function $\omega^1$ on the root system by $\omega^1(\alpha_1)=\omega^1(\alpha_2)=1\mod 6$ (where $6$ is the Coxeter number of $G_2$). Then $\omega^2(\alpha,\beta)$ is $\mb{I}$ to the power $\frac{1}{6}(\res\omega^1(\beta)-\res\omega^1(\alpha\rootp\beta)+\res\omega^1(\alpha))$, where $\res:\mb{Z}/6\mb{Z}\rightarrow\{0,1,2,3,4,5\}$ is the residue. This construction produces a $2$-cocycle because it is the boundary of the $1$-cochain $\res\omega^1$. It shows moreover that $\omega^2$ is closely related to the grading of the affine Kac-Moody algebra of type $G_2^{(1)}$ obtained by assigning degree $1$ to the positive Chevalley generators $e_0, e_1, e_2$ and degree $0$ to the Cartan subalgebra. This grading is sometimes called principal, or canonical. Of course, any grading of a Kac-Moody algebra of type $X_\ell^{(1)}$ defined by assigning degree $s_i\in\mb{Z}$ to Chevalley generator $e_i$ and degree $0$ to the Cartan subalgebra produces by this construction a $2$-cocycle on the root system of type $X_\ell$ with values $1$ and $\mb{I}$.

The cocycle of Figure \ref{fig:asymmetric cocycle} is expected to describe part of the structure of an ALiA based on $G_2$ with dihedral symmetry. These ALiAs are however not yet computed. 
\end{Example}

\section{Diagonal contractions and Cartan filtrations}
\label{sec:diagonal contractions and cartan filtrations}
\newcommand{\mc}[1]{\mathcal{#1}}
\newcommand{\GL}{\mathrm{GL}}
\newcommand{\Id}{\mathrm{Id}}
\newcommand{\diag}{\mathrm{diag}}
\newcommand{\sq}{0.866}

The root system cohomology is introduced in this paper in order to study Automorphic Lie Algebras, but there are applications in other areas of Lie theory. In this section we describe a  
connection between root cohomology and contractions
and filtrations of Lie algebras using the vanishing of the second cohomology group. 
Interestingly, all three mentioned applications are associated to those $2$-cocycles that have no negative values on $2$-links.
This makes the following notational convention convenient.
\begin{Definition}
Let $C^n_\pm(\roots,\mb{N}_0)$ be the forms in $C^n_\pm(\roots,\mb{Z})$ whose values on $T_n(\roots)$ belong to $\mb{N}_0$, and $Z^n_\pm(\roots,\mb{N}_0)=Z^n_\pm(\roots,\mb{Z})\cap C^n_\pm(\roots,\mb{N}_0)$.
Likewise, let $C^n(\roots,\{0,1\})$ be the forms in $C^n(\roots,\mb{Z})$ whose values on $T_n(\roots)$ belong to $\{0,1\}$ and $Z^n_\pm(\roots,\{0,1\})=Z^n_\pm(\roots,\mb{Z})\cap C^n_\pm(\roots,\{0,1\})$. Let $C^n(\roots,\mb{R}_{\ge 0})$ be the forms in $C^n(\roots,\mb{R})$ whose values on $T_n(\roots)$ belong to $\mb{R}_{\ge 0}$ and $Z^n_\pm(\roots,\mb{R}_{\ge 0})=Z^n_\pm(\roots,\mb{R})\cap C^n_\pm(\roots,\mb{R}_{\ge 0})$. 
\end{Definition}
\subsection{Diagonal contractions}
In this section we write a Lie algebra as a pair $\mf{g}=(V,\mu)$ where $V$ is a complex finite dimensional vector space and $\mu$ is a Lie bracket on $V$. Notice that $\mu$ is an element of the submanifold $\mc{L}$ of $V^\ast\otimes_{\mb{C}} V^\ast\otimes_{\mb{C}} V$ consisting of all Lie brackets on $V$, and $\GL(V)$ acts on this manifold in a canonical way 
\[(U\cdot\mu)(x,y)= U\mu(U^{-1}x,U^{-1}y),\quad U\in\GL(V),\,\mu\in\mc{L},\, x,y\in V.\] The orbits of this action are the manifolds of equivalent Lie brackets.

\begin{Definition}
A Lie algebra $\mf{g}^0=(V,\mu^0)$ is a \emph{contraction} of $\mf{g}=(V,\mu)$ if $\mu^0$ is an element of the closure (Zarisky or Euclidean) of the orbit $\GL(V)\mu$, and we write $\mf{g}\rightarrow\mf{g}^0$, or, when $V$ is understood, $\mu\rightarrow\mu^0$. 
\end{Definition}
For example, if $z\mapsto U_z$ defines a continuous curve $(0,1]\rightarrow \GL(V)$ and the limit $\mu^0=\lim_{z\rightarrow 0^+}U_z\cdot \mu$ exists (all the structure constants have finite limit), then $(V,\mu^0)$ is a contraction of $(V,\mu)$. 
Any Lie algebra has an abelian contraction, as is shown by taking $U_z=z\Id$. 
As a second, more general example, consider the curve $z\mapsto D_z=\diag(z^{-r_1},\ldots,z^{-r_n})$ acting on basis $\{x_1,\ldots,x_k\}$ of $\mf{g}$. Let $C^k_{ij}$ be the structure constants defining $\mu$ with respect to this basis, and $D_z\cdot C^k_{ij}$ be the structure constants defining $D_z\cdot\mu$ . Then 
\begin{equation}
\label{eq:action on structure constants}
D_z\cdot C^k_{ij}= z^{r_i+r_j-r_{k}}C^k_{i,j}.
\end{equation}
We see that nonzero structure constants can become zero in the contraction, but not the other way around. 
\begin{Definition}
Two contractions $\mf{g}\rightarrow\mf{g}^0$ and $\tilde{\mf{g}}\rightarrow\tilde{\mf{g}}^0$ are called \emph{equivalent} if there are Lie algebra isomorphisms $\mf{g}\cong\tilde{\mf{g}}$ and $\mf{g}^0\cong\tilde{\mf{g}}^0$.
\end{Definition}
\begin{Definition}
A contraction is called \emph{diagonal} if it can be realised by a curve $z\mapsto U_z=A D_z B$ where $A, B\in\GL(V)$ and $D_z=\diag(f_1(z),\ldots,f_k(z))$ and $f_i:(0,1]\rightarrow \mb{C}\setminus\{0\}$. A diagonal contraction is called \emph{generalised In\"{o}n\"{u}-Wigner} if one can choose $f_i(z)=z^{r_i}$ with $r_i$ in $\mb{R}$. The tuple $(r_1,\ldots,r_k)$ is called the \emph{signature} of the curve. 
\end{Definition}
When considering diagonal contractions up to equivalence, one can forget about $A$ and $B$. If $U_z=A D_z B$ defines a contraction $\mu\rightarrow\mu^0$, then $D_z$ defines the equivalent contraction $B\cdot\mu\rightarrow A^{-1}\cdot\mu^0$. A more substantial simplification is given by the following.
\begin{Theorem}[\cite{popovych2009equivalence}]
\label{thm:diagonal contractions are iw-contractions}
Any diagonal contraction is equivalent to a generalised In\"{o}n\"{u}-Wigner contraction that can be realised by a curve with integer signature.
\end{Theorem} 
The cocycles $Z^2_+(\roots,\mb{R}_{\ge 0})$ are related to contractions as follows.
Consider a diagonalisable curve $z\mapsto D_z$ with real powers of $z$ as eigenvalues, fixing a Cartan subalgebra $\mf{h}$ of a simple Lie algebra $\mf{g}$ pointwise. Then $D_z$ commutes with the adjoint action of $\mf{h}$ hence there exists a basis for $V$ diagonalising $D_z$ and $\ad(\mf{h})$ simultaneously. Since root spaces of simple Lie algebras are one-dimensional, we obtain a map $\omega^1\in C^1(\roots,\mb{R})$ defined by $\omega(\alpha)=r$ if $D_z$ acts on $\mf{g}_\alpha$ by multiplication with $z^{-r}$.
From (\ref{eq:action on structure constants}) we can see that the Lie bracket $D_z\cdot \mu$ corresponds to (\ref{eq:genCartanWeyl}) defined by $\omega^2_+=\mathsf{d}^1\omega^1$ when the signature is integer. The limit $\mu^0=\lim_{z\mapsto 0}D_z\cdot \mu$ exists if and only if $\omega^2_+(\alpha,\beta)\ge 0$ for all $\lbr\alpha \br \beta\rbr\in T_2(\roots)$, i.e. $\omega^2_+\in Z^2_+(\roots,\mb{R}_{\ge 0})$. Moreover, it then only depends on the kernel of $\omega^2_+$.

We can also go in the other direction thanks to the acyclicity $H^2_+(\roots,\mb{R})=0$: any cocycle $\omega^2_+\in Z^2_+(\roots,\mb{R}_{\ge 0})$ defines a contraction of the simple Lie algebra $\mf{g}$ as follows. Let $\omega^1\in C^1(\roots,\mb{R})$ be such that $\mathsf{d}^1\omega^1=\omega^2_+$.  Define $D_{z}\in\GL(\mf{g})$ to be the identity on $\mf{h}$ and multiplication by $z^{-\omega^1(\alpha)}$ on $\mf{g}_{\alpha}$ for $\alpha\in\roots$. Then $D_z$ defines a contraction of $\mf{g}$ because $\omega^2_+(\alpha,\beta)\ge 0$ for all $\lbr\alpha \br \beta\rbr\in T_2(\roots)$.

We have found a relation between the kernels of elements of $Z^2_+(\roots,\mb{R}_{\ge 0})$ and a specific collection of contractions of $\mf{g}$. In the next subsection we present another description of this collection of contractions.

\subsection{Cartan filtrations}
Generalised In\"{o}n\"{u}-Wigner  
contractions with integer signature correspond to integer filtrations.
\begin{Definition}
A $\mb{Z}$-filtration $\{V_i\,|\,i\in\mb{Z}\}$ of a Lie algebra $\mf{g}=(V,\mu)$ is a collection of nested linear subspaces 
$\ldots V_{-2}\subset V_{-1}\subset V_0\subset V_1\subset V_2\ldots$
of $V$ such that 
\begin{equation}
\label{eq:filtration}
\mu(V_i,V_j)\subset V_{i+j}.
\end{equation}
The filtration is bounded if there exists $p$ and $q$ such that $V_p=\{0\}$ and $V_q=V$. 
\end{Definition}
Throughout this section, a filtration will be a bounded $\mb{Z}$-filtration.
Notice that $(V_0,\mu)$ is a Lie algebra and each $V_i$ is a $(V_0,\mu)$-module.

To each filtration is associated a $\mb{Z}$-graded Lie algebra $\mf{g}^0=(V,\mu^0)$ as follows. We have $V\cong\bigoplus_i W_i$ (vector space direct sum) where $W_i=V_i/V_{i-1}$. This isomorphism will be suppressed in the notation. Let $\pi_i:V_i\rightarrow W_i$ be the canonical projection. Notice that $\pi_i(V_j)$ is zero if $j<i$ and not defined if $j>i$. Define
\begin{equation}
\label{eq:bracket graded Lie algebra}
\mu^0(\pi_i{x},\pi_j{y})=\pi_{i+j}{\mu(x,y)}.
\end{equation}
The filtration condition (\ref{eq:filtration}) ensures that this bracket is well defined and graded, $\mu^0(W_i,W_j)\subset W_{i+j}$, and $\mf{g}^0$ is a $\mb{Z}$-graded Lie algebra.
\begin{Lemma}[{\cite[Thm 1.2]{grunewald1988varieties}}]
\label{lem:contractions and filtrations}
If $\mf{g}\rightarrow\mf{g}^0$ is a generalised In\"{o}n\"{u}-Wigner contraction with integer signature, then $\mf{g}^0$ is the graded Lie algebra associated to a filtration of $\mf{g}$.

A filtration $\{V_i\,|\,i\in\mb{Z}\}$ realising $\mf{g}^0$ is defined from a curve $z\mapsto D_z$ realising the contraction $\mf{g}\rightarrow\mf{g}^0$ by setting $U_i$ to be the eigenspace of $D_z$ with eigenvector $z^{-i}$ and defining $V_i=\sum_{j\le i} U_j$.

If $\mf{g}^0$ is the graded Lie algebra associated to a filtration of $\mf{g}$, then $\mf{g}\rightarrow\mf{g}^0$ is a generalised In\"{o}n\"{u}-Wigner contraction with integer signature.

A curve  $z\mapsto D_z$ realising a contraction $\mf{g}\rightarrow\mf{g}^0$  is defined from a filtration $\{V_i\,|\,i\in\mb{Z}\}$ realising $\mf{g}^0$
by letting $D_z$ to act on $W_i=V_i/V_{i-1}$ by multiplication with $z^{-i}$ and using the linear isomorphism $V\cong \bigoplus_i W_i$ to define the action of $D_z$ on $V$.
\end{Lemma}
\begin{Definition}
A filtration $\{V_i\,|\,i\in\mb{Z}\}$ of a Lie algebra $\mf{g}=(V,\mu)$ is called a Cartan filtration if $V_0$ contains a Cartan subalgebra of $\mf{g}$.
\end{Definition}
In \cite{barnea2006filtrations} it is shown that the Cartan filtrations of a simple Lie algebras of classical type include their maximal filtrations, which in turn are used to classify maximal graded subalgebras of affine Kac-Moody algebras. Our current interest is with Cartan filtrations themselves.

To each Cartan filtration $\{V_i\,|\,i\in\mb{Z}\}$ of a simple Lie algebra
$\mf{g}$, with Cartan subalgebra $\mf{h}\subset V_0$ defining root spaces $V_\alpha$, $\alpha\in\roots$, we associate an element of $\omega^1\in C^1(\roots,\mb{Z})$ defined by  
\begin{equation}
\label{eq:degree function}
\omega^1(\alpha)=\min\{i\in\mb{Z}\,|\,V_\alpha\subset V_i\}
\end{equation}
called the degree function in \cite{barnea2006filtrations}.
\begin{Lemma}
\label{lem:filtration defines degree function}
The degree function associated to a Cartan filtration $\{V_i\,|\,i\in\mb{Z}\}$ is independent of the choice of Cartan subalgebra of $\mf{g}$ in $V_0$. Thus the filtration uniquely determines the degree function.
\end{Lemma}
\begin{proof}
Let $\mf{h}$ and $\tilde{\mf{h}}$ be Cartan subalgebras of $\mf{g}$ in $V_0$. Then they are also Cartan subalgebras of the Lie algebra $V_0$. Indeed, they are nilpotent and self-normalising since $\mf{h}\subset N_{V_0}(\mf{h})\subset N_{\mf{g}}(\mf{h})=\mf{h}$, idem for $\tilde{\mf{h}}$. Consequently, $\tilde{\mf{h}}=g\mf{h}$ where g is an inner automorphism $g$ of $V_0$. That is, $g$ is a product with factors $e^{\ad(x)}$ where $x\in V_0$. Since $\ad(V_0)V_i\subset V_i$ we get $g V_i=V_i$.

If $\alpha$ is a root of $\mf{h}$, then $\tilde{\alpha}=\alpha\circ g^{-1}$ is a root of $\tilde{\mf{h}}$ and this defines an isomorphism between the root systems for $\mf{h}$ and $\tilde{\mf{h}}$. The root spaces satisfy $V_{\tilde{\alpha}}=g V_\alpha$. Therefore  $V_{\tilde{\alpha}}=g V_\alpha\subset V_i$ if and only if $V_\alpha\subset g^{-1}V_i=V_i$.

Let $\tilde{\omega}^1$ be the degree function defined by the Cartan subalgebra $\tilde{\mf{h}}$. Then $\tilde{\omega}^1(\tilde{\alpha})=\min\{i\in\mb{Z}\,|\,V_{\tilde{\alpha}}\subset V_i\}=\min\{i\in\mb{Z}\,|\,V_{{\alpha}}\subset V_i\}=\omega^1(\alpha)$ and this finishes the proof.
\end{proof}
The next lemma from \cite{barnea2006filtrations} is paraphrased using some of the terminology introduced in this paper.
\begin{Lemma}[{\cite[Lemma 3.1]{barnea2006filtrations}}]
\label{lem:barnea}
Let $\{V_i\,|\,i\in\mb{Z}\}$ be a Cartan filtration of the simple Lie algebra
$\mf{g}$, $\mf{h}\subset V_0$ a Cartan subalgebra defining root spaces $V_\alpha$, $\alpha\in\roots$, and $\omega^1$ the degree function of the filtration.
\begin{enumerate}
\item For each $i\in\mb{Z}$, we have $V_i=\sum V_\alpha$, where the sum is over all $\alpha\in\roots$ with $\omega^1(\alpha)\le i$. In particular, $\omega^1$ uniquely determines the filtration.
\item The boundary $\mathsf{d}^1\omega^1$ takes no negative values on $ T_2(\roots)$, i.e. $\mathsf{d}^1\omega^1\in Z^2_+(\roots,\mb{N}_{0})$.
\end{enumerate}
\end{Lemma}
\begin{Theorem}
\label{thm:cartan filtrations and 1-cochains}
Let $\mf{g}$ be a simple Lie algebra with root system $\roots$. The degree function (\ref{eq:degree function}) defines a $1-1$ relation between Cartan filtration of $\mf{g}$ and the subset of $C^1(\roots,\mb{Z})$ that differentiates into $C^2(\roots,\mb{N}_0)$.
\end{Theorem}
\begin{proof}
Let $\int C^2(\roots,\mb{N}_0)$ denote the subset of $C^1(\roots,\mb{Z})$ that differentiates into $C^2(\roots,\mb{N}_0)$.
Consider the map from the set of Cartan filtrations to $C^1(\roots,\mb{Z})$ sending a filtration to its degree function. This map is well defined by Lemma \ref{lem:filtration defines degree function}. It is injective by the first item of Lemma \ref{lem:barnea}.  
Its image is contained in $\int C^2(\roots,\mb{N}_0)$ by the second item of Lemma \ref{lem:barnea}. Finally, notice that $\int C^2(\roots,\mb{N}_0)$ is contained in the image, since for any $\omega^1\in\int C^2(\roots,\mb{N}_0)$, the spaces $V_i=\sum_{\omega^1(\alpha)\le i} V_\alpha$ satisfy (\ref{eq:filtration}) and $\mf{h}\subset V_0$ hence define a Cartan filtration of $\mf{g}$.
\end{proof}

The graded algebra $\mf{g}^0$  associated to a Cartan filtration $\{V_i\,|\,i\in\mb{Z}\}$ is defined by $\mathsf{d}\omega^1$. In fact, it is enough to know the subset of $T_2(\roots)$ on which $\mathsf{d}\omega^1$ vanishes. We will call this the kernel of $\mathsf{d}\omega^1$.
Indeed, (\ref{eq:bracket graded Lie algebra}) reads $\mu^0(\pi_{\omega^1(\alpha)}e_\alpha,\pi_{\omega^1(\beta)}e_\beta)=\pi_{\omega^1(\alpha)+\omega^1(\beta)}\mu(e_\alpha,e_\beta)$. 
Since $\pi_i V_j=\{0\}$ if $j<i$ we get
\begin{equation}
\label{eq:contraction from 2-cocycle}
\mu^0(e_\alpha,e_\beta)=\left\{\begin{tabular}{ll} $\mu(e_\alpha,e_\beta)$& if $\mathsf{d}\omega^1=0$,\\$0$& if $\mathsf{d}\omega^1>0$\end{tabular}\right .
\end{equation}
(if one does not mind a detour into contractions, one can also see from Lemma \ref{lem:contractions and filtrations} and equation (\ref{eq:action on structure constants}) that 
$
\mu^0({e}_\alpha,{e}_\beta)=\lim_{z\rightarrow 0}z^{\mathsf{d}\omega^1(\alpha,\beta)}{\mu({e}_\alpha,{e}_\beta)}
$).
Let 
$\ker Z^2_+(\roots,M)=\{\ker\omega^2_+\,|\,\omega^2_+\in Z^2_+(\roots,M)\}$.
\begin{Theorem}
\label{thm:cartan filtrations and 2-cocycles}
Let $\mf{g}$ be a simple Lie algebra with root system $\roots$.
There is a $1-1$ relation between contractions associated to Cartan filtration of $\mf{g}$, up to equivalence, and 
$\Aut(\roots)$-orbits in $\ker Z^2_+(\roots,\mb{N}_0)$.
\end{Theorem}
\begin{proof}
Let $\mf{g}\rightarrow\mf{g}^0$ be a contraction and suppose there is a Cartan filtration of $\mf{g}$ with associated graded Lie algebra $\mf{g}^0$. By Theorem \ref{thm:cartan filtrations and 1-cochains} this filtration can be identified with its degree function in $C^1(\roots,\mb{Z})$ that differentiates into $ Z^2_+(\roots,\mb{N}_0)$. From equation (\ref{eq:contraction from 2-cocycle}) we see that $\ker \mathsf{d}^1\omega^1$ only depends on $\mf{g}^0$, and therefore is independent of the choice of Cartan filtration made above. Moreover, we see from (\ref{eq:contraction from 2-cocycle}) that nonisomorphic contractions map to elements of $\ker Z^2_+(\roots,\mb{N}_0)$ which are not in the same $\Aut(\roots)$-orbit. Thus we have constructed an injective map from the set of contractions associated to Cartan filtration into $\ker Z^2_+(\roots,\mb{N}_0)/\Aut(\roots)$.

It remains to be shown that each element of $\ker Z^2_+(\roots,\mb{N}_0)/\Aut(\roots)$ occurs this way. This follows from Theorem \ref{thm:H^2_+=0}.
Let $\omega^2_+\in Z^2_+(\roots,\mb{N}_0)$. Since $H^2_+(\roots,\mb{Z})=\{0\}$ there exists $\omega^1\in C^1(\roots,\mb{Z})$ such that $\mathsf{d}^1\omega^1=\omega^2_+$. By Theorem \ref{thm:cartan filtrations and 1-cochains} there is a Cartan filtration with degree function $\omega^1$, and associated a contraction. The map described in the previous paragraph sends this contraction to the $\Aut(\roots)$-orbit of $\ker \omega^2_+$.
\end{proof}

In the theory of Automorphic Lie Algebras the $2$-cocycles that play a central role are $Z^2_+(\roots,\{0,1\})$ \cite{knibbeler2019hereditary}. The next example shows that such cocycles do not exhibit all kernels $\ker Z^2_+(\roots,\mb{N}_0)$, and therefore not all contractions associated to Cartan filtrations.
\begin{Example}[$\ker Z^2_+(\roots,\mb{N}_0)\ne \ker Z^2_+(\roots,\{0,1\})$.]
Let $\omega^2_+\in C^2_+(\roots(A_2),\mb{N}_0)$ be defined by the following picture, where the number of edges denotes the value of $\omega^2$.
\[\begin{tikzpicture}[scale=\scaleAA]
  \path node at ( 0,0) [label=270: $ $] (zero) {$ $}	
	node at ( 4,0) [draw,root,label=0: $ $] (one) {$ $}
  	node at ( 2,4*\sq) [draw,root,label=60: $ $] (two) {$ $}
  	node at ( -2,4*\sq) [draw,root,label=120: $ $] (three) {$ $}
	node at ( -4,0) [draw,root,label=180: $ $] (four) {$ $}
	node at ( -2,-4*\sq) [draw,root,label=240: $ $] (five) {$ $}
	node at ( 2,-4*\sq) [draw,root,label=300: $ $] (six) {$ $};
	\draw [scochainp](one) to node [sloped,above,near end]{} (three);
	\draw [scochainp](one.150) to node [sloped,above,near end]{} (four.30);
	\draw [scochainp](one.-150) to node [sloped,above,near end]{} (four.-30);
	\draw [scochainp](one) to node [sloped,above,near end]{} (five);
	\draw [scochainp](two) to node [sloped,below,near end]{} (five);
	\draw [scochainp](two) to node [sloped,above]{} (four);
	\draw [scochainp](three) to node [sloped,below]{} (six);
	\draw [scochainp](four) to node [sloped,below]{} (six);
\end{tikzpicture}\]
One can check that $\omega^2_+$ is a cocycle by evaluating $\mathsf{d}^2\omega^2_+$ in all the $3$-links which are listed in Example \ref{ex:H^2_-ne0}, or by integrating it.
If $\tilde{\omega}^2_+$ is the element of  $C^2_+(\roots,\{0,1\})$ with $\ker \tilde{\omega}^2_+=\ker {\omega}^2_+$ then $\tilde{\omega}^2_+$ is not a cocycle.
\end{Example}

\section{Conclusions}
We have shown how the cohomology of root systems appears naturally in the theory of ALiAs, once a Cartan-Weyl basis of the ALiA is computed.
The theory of ALiAs, and more specifically, their normal form theory, has been developed in the last decade and this explains why there is no
mention of cohomology of root systems in the literature, since it is the symmetric case that appears in a natural way and there does not exist
an analogous theory in the skew symmetric case, although there are some developments in the theory of Kac-Moody algebras that remind one of such a theory (cf. \cite[p. 105]{MR1104219}).

Even though the application of groupoid cohomology to root systems is new,
it is naturally connected to classical theory of simple Lie algebras, their root systems and representations. For instance, $1$-cocycles $Z^1(\roots,A)$ with values in an abelian group $A$ define gradings of the Lie algebra $\mf{g}=\bigoplus_{a\in A}\mf{g}_a$. In particular,  $Z^1(\roots,\Zn{n})$ corresponds to inner automorphisms $\phi$ with $\phi^n=1$, fixing the Cartan subalgebra. Also, $Z^1(\roots,\mb{Z})$ can be identified with the coweight lattice of the  Lie algebra associated to $\roots$ (using the inner product on the dual of the Cartan subalgebra). Thus, the Weyl group orbits in $Z^1(\roots,\mb{Z})$ correspond to the irreducible representations of the Lie algebra associated to the dual $\roots^\vee$. Moreover, as shown in Section \ref{sec:diagonal contractions and cartan filtrations}, the subset of $C^1(\roots,\mb{Z})$ that differentiates into $C^2(\roots,\mb{N}_0)$ is in $1-1$ correspondence with Cartan $\mb{Z}$-filtrations.
Taking the $\Aut(\roots)$-orbit of the kernel of the derivative of the former, and taking the graded algebra related to the latter results in another $1-1$ correspondence. Thus associating $Z^2_+(\roots,\mb{N}_0)$ to a specific type of contraction.

The results of this work open up a number of interesting problems. Within the theory of ALiAs we solved a first problem in the form of Theorem \ref{thm:H^2_+=0}. That is, when an ALiA is computed and the $2$-cocycle describing its structure obtained, then one can use the integration procedure of Theorem \ref{thm:H^2_+=0} to obtain a concrete model for this Lie algebra.
A second important open problem in the application of root cohomology to the theory of ALiAs, is to classify `binary' $2$-cocycles to advance the classification of ALiAs. 
Indeed, it can be shown that if an ALiA contains a certain Cartan subalgebra, then it is described by a triple of $2$-cocycles $\omega^2\in Z^2_+(\roots,\Mon[\mb{I}])$ with image in $\{1,\mb{I}\}$. Thus one strategy towards a classification of ALiAs is to investigate existence of the Cartan subalgebra and to classify these $2$-cocycles that can only take two distinct values.

There are various open questions regarding integration and differentiation which all have a relevant meaning in the study of ALiAs and their concrete models.
An obvious problem is to find $H^2_+(\roots,\Mon[\mb{I}])$. In other words, what are the conditions under which one can integrate $\omega^2\in Z^2_+(\roots,\Mon[\mb{I}])$ to $\omega^1\in C^1(\roots,\Mon[\mb{I}])$.\\
A further interesting problem is to find out under what conditions $\omega^1\in C^1(\roots,\Mon[\mb{I}])$ is differentiated into $C^2_+(\roots,\Mon[\mb{I}])$, and similarly, under what conditions $\omega^1\in C^1(\roots,\Mon(\mb{I}))$ is differentiated into $C^2_+(\roots,\Mon[\mb{I}])$. Also, it would be interesting to state under what conditions  $\omega^2\in Z^2_+(\roots,\Mon[\mb{I}])$ can be integrated to $\omega^1\in C^1(\roots,\Mon[\mb{I}^{\nicefrac{1}{n}}])$.

Considering the importance of infinite dimensional Lie algebras in both physics and mathematics, we hope that this approach will stimulate future
research and lead to new and interesting
developments.


\def\cprime{$'$}

\end{document}